\documentclass[12pt]{article}
\usepackage{booktabs}
\usepackage{caption}
\usepackage{mathrsfs}
\usepackage{amsmath}
\usepackage{amsfonts,amsthm,amssymb,mathrsfs,bbding}
\usepackage{float}
\usepackage{graphics,epstopdf}
\usepackage{graphics,multicol}
\usepackage{graphicx}
\usepackage{color}
\usepackage{enumerate}
\usepackage{caption}
\usepackage{rotating}
\usepackage{lscape}
\usepackage{longtable}
\allowdisplaybreaks[4]
\usepackage{tabularx}
\usepackage[colorlinks=true,anchorcolor=blue,filecolor=blue,linkcolor=blue,urlcolor=blue,citecolor=blue]{hyperref}
\usepackage{extarrows}
\usepackage{cite}
\usepackage{latexsym,bm}
\usepackage{mathtools}
\pagestyle{myheadings} \markright{} \textwidth 150mm \textheight 235mm \oddsidemargin=1cm
\evensidemargin=\oddsidemargin\topmargin=-1.5cm

\newtheorem{thm}{Theorem}
\newtheorem{prob}{Problem}

\newtheorem{lem}{Lemma}[section]

\newtheorem{pro}{Proposition}

\newtheorem{conj}{Conjecture}[section]
\newtheorem{claim}{Claim}[section]

\addtocounter{section}{0}
\begin{document}
\title{Counting substructures and eigenvalues II: quadrilaterals}

\author{Bo Ning\thanks{College of Cyber Science, Nankai University, Tianjin 300350, P.R.
China. Email: bo.ning@nankai.edu.cn. Partially supported
by the NSFC grant (No.\ 11971346).}~~~
~~~Mingqing Zhai\thanks{Corresponding author. 
School of Mathematics and Finance, Chuzhou
University, Chuzhou, Anhui 239012, P.R. China. 
Email: mqzhai@chzu.edu.cn. Partially supported 
by the NSFC grant (No. 12171066) and APNSF (No. 2108085MA13).}}

\maketitle
{\flushleft\large\bf Abstract:}
Let $G$ be a graph and $\lambda(G)$ be the spectral radius
of $G$. A previous result due to Nikiforov [Linear Algebra 
Appl., 2009] in spectral graph theory asserted that every 
graph $G$ on $m\geq 10$ edges contains a 4-cycle 
if $\lambda(G)>\sqrt{m}$. Define $f(m)$ to be the 
minimum number of copies of 4-cycles
in such a graph. A consequence of a recent theorem due to 
Zhai et al. [European J. Combin., 2021] shows that $f(m)=\Omega(m)$.
In this article, by somewhat different techniques, we prove 
that $f(m)=\Theta(m^2)$. We left the solution to 
$\lim\limits_{m\rightarrow \infty} \frac{f(m)}{m^2}$ as a 
problem, and also mention other ones for further study.

\begin{flushleft}
\textbf{Keywords:} Quadrilaterals; Spectral radius; Counting
\end{flushleft}
\textbf{AMS Classification:} 05C50; 05C35

\section{Introduction}
This is the second paper of our project \cite{NZ21+}, which aims 
to study the relationship between copies of a given substructure 
and the eigenvalues of a graph. In this article, we study 
the supersaturation problem of 4-cycles under the eigenvalue condition.

The study of 4-cycles plays an important role in the history of 
extremal graph theory. The extremal number of $C_4$ (i.e., 
a 4-cycle), denoted by $ex(n,C_4)$, is defined to be the maximum
number of edges in a graph which contains no 4-cycle as a 
subgraph. The study of $ex(n,C_4)$ can be at least dated
back to Erd\H{o}s \cite{E38} eighty years ago.
A longstanding conjecture of Erd\H{o}s and Simonovits \cite{ES84}
(see also \cite[p.~84]{CG98}) states that every graph
on $n$ vertices and at least $ex(n,C_4)+1$ edges contains
at least two copies of 4-cycles when $n$ is large. Very recently,
He, Ma and Yang \cite{Y21+} announced this conjecture
does not hold for the cases $n=q^2+q+2$ where $q=4^k$ is large.

The original supersaturation problem of subgraphs in 
graphs focuses on the following function:
for a given graph $H$ and for integers $n,t\geq1$,
$$h_{H}(n,t) = min\{\#H: |V(G)|=n, |E(G)|=ex(n,H)+t\},$$
where $ex(n,H)$ is the Tur\'an function of $H$.
Establishing a conjecture of Erd\H{o}s, Lov\'asz 
and Simonovits \cite{LS83} proved that
$h_{C_3}(n,k)\geq k\lfloor\frac{n}{2}\rfloor$ for all 
$1\leq k<\lfloor\frac{n}{2}\rfloor$. But He et al.'s result 
tells us $h_{C_4}(n,1)=1$ for some positive integers $n$.
This means that supersaturation phenomenon of $C_4$ 
is quite different from the cases of triangles \cite{LS83}.
On the other hand, counting the copies of 4-cycles 
plays a heuristic important role in measuring
the quasirandom-ness of a graph (see Chap. 9 in \cite{AS08}).

As an important case of spectral Zarankiewicz problem, 
Nikiforov \cite{N10} proved that every $n$-vertex 
$C_4$-free graph satisfies that
$\lambda(G)\leq \frac{1}{2}+\sqrt{n-\frac{3}{4}}$ where $\lambda(G)$
is the spectral radius of $G$, and
the earlier bound of Babai and Guiduli \cite{BG08} 
gives the correct order of the main term.
As the counterpart of these results, we consider sufficient
eigenvalue condition (in terms of the size of a graph) for the
existence of 4-cycles. A pioneer result can be found in \cite{N09}.
\begin{thm}[\!\!\cite{N09}]\label{Thm:N09}
Let $G$ be a graph with $m$ edges, where $m\geq10$. If $\lambda(G)\geq\sqrt m$
then $G$ contains a 4-cycle, unless $G$ is a star (possibly with some isolated vertices).
\end{thm}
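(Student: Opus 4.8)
The plan is to prove the contrapositive: a $C_4$-free graph $G$ with $m\ge 10$ edges that is not a star (together with possibly some isolated vertices) satisfies $\lambda(G)<\sqrt m$. The reduction to the case where $G$ is connected and not a star is routine: if $G_1$ is a component with $\lambda(G_1)=\lambda(G)$ and $m_1$ edges, then $\lambda(G_1)\ge\sqrt m\ge\sqrt{m_1}$; a star $G_1$ forces $m_1=m$, so $G$ is a star with isolated vertices, while for a non-star $G_1$ one invokes the connected case, checking by hand that a connected non-star $C_4$-free graph with at most $9$ edges has spectral radius below $\sqrt{10}$. So assume $G$ is connected, $C_4$-free, not a star, $m\ge 10$, with Perron eigenvector $x>0$.

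The engine is the identity
\[
\lambda(G)^2 \;=\; m \;+\; \frac{1}{\|x\|_{1}}\sum_{w\in V(G)} x_w\,\varphi(w),\qquad \varphi(w):= e\big(N(w)\big) - e\big(V(G)\setminus N[w]\big),
\]
valid for every connected graph. To derive it, sum $\lambda^2 x_w=\sum_{v\sim w}\sum_{z\sim v}x_z$ over $w$; since the multiset of walks $w\sim v\sim z$ is symmetric in $w$ and $z$, the result equals $\sum_w x_w\sum_{v\sim w}d(v)$, and classifying the edges of $G$ by their position relative to $N[w]$ gives $\sum_{v\sim w}d(v)=m+\varphi(w)$. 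For a star every $\varphi(w)$ vanishes, so $\lambda^2=m$ — which is exactly why stars are the exceptional graphs — and it remains to prove $\sum_w x_w\varphi(w)<0$ for non-stars.

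Here $C_4$-freeness enters. Because two vertices have at most one common neighbour, $N(w)$ induces a matching for every $w$ (a $P_3$ in $N(w)$ would close a $4$-cycle through $w$), so $e(N(w))$ equals the number of triangles through $w$ and is at most $\lfloor d(w)/2\rfloor$, and consequently $\sum_w x_w e(N(w))=\sum_{T}\sum_{v\in T}x_v$ summed over triangles $T$. On the other side $\sum_w x_w\,e\big(V(G)\setminus N[w]\big)=\sum_{ab\in E(G)}\ \sum_{w\notin N[a]\cup N[b]}x_w$. The goal is to show the latter strictly exceeds the former: again by $C_4$-freeness every edge lies in at most one triangle, and, since $G$ is not a star, the closed neighbourhoods $N[a]\cup N[b]$ of edges $ab$ cannot all exhaust $V(G)$, so there is a genuine reservoir of "far" vertices contributing to the right-hand sum; weighing that reservoir against the total triangle weight forces $\lambda^2<m$ once $m\ge 10$.

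The main obstacle is precisely this last weighing, because the inequality $\sum_w x_w\varphi(w)<0$ is tight and breaks under lossy estimates. For the friendship graph $F_k$ ($k$ triangles sharing a vertex, $m=3k$) one checks that $\sum_w x_w\varphi(w)<0$ holds exactly for $k\ge 4$, i.e. $m\ge 12$, while $F_3$ with $m=9$ gives $\sum_w x_w\varphi(w)=0$ and $\lambda=3=\sqrt m$. So the argument must be sharp enough to show that every connected non-star $C_4$-free graph with $\sum_w x_w\varphi(w)\ge 0$ has at most $9$ edges; I would handle this by separating the triangle-free case (where the supply of far vertices is transparent, as $C_4$-freeness forbids chords on short paths) from the case with triangles (where one barters each triangle against nearby edges and far vertices, exploiting that the triangles of a $C_4$-free graph pairwise share at most one vertex), and finish with a short check of the configurations near the threshold.
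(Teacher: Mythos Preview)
The paper does not prove Theorem~\ref{Thm:N09}; it is quoted as Nikiforov's result from \cite{N09}. The only information the paper gives about its proof is in Section~3.2, where it records that Nikiforov's argument proceeds via the ``deleting small eigenvalue edge'' (DSEE) method: one iteratively removes an edge $uv$ with small Perron weight $x_ux_v$, tracking $\lambda$ against $\sqrt{e(G)}$, until the remaining graph is forced into a recognizable shape. That is an entirely different mechanism from your weighted-identity approach.

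On your proposal itself: the identity
\[
\lambda(G)^2=m+\frac{1}{\|x\|_1}\sum_{w}x_w\bigl(e(N(w))-e(V\setminus N[w])\bigr)
\]
is correct, and the reduction to the connected case and the triangle-free sub-case are sound (indeed, for girth $\ge 5$ every $\varphi(w)\le 0$ and strict negativity for some $w$ follows as you indicate). The problem is the remaining case. You correctly isolate it as the obstacle, note the tightness at $F_3$ with $m=9$, and then write only a plan: ``separate \dots\ barter each triangle against nearby edges and far vertices \dots\ finish with a short check of the configurations near the threshold.'' None of this is carried out. There is no inequality, no injection, no explicit list of threshold configurations---nothing that actually distinguishes $m\ge 10$ from $m=9$. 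Since the whole difficulty of the theorem lives in exactly this step (the identity and the triangle-free case are soft), what you have is a reasonable framework but not a proof.
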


Recently, Theorem \ref{Thm:N09} was extended by the following.

\begin{thm}[\!\!\cite{ZLS21}]\label{Thm:ZLS21}
Let $r$ be a positive integer and $G$ be a 
graph with $m$ edges where $m\geq16r^2$. If 
$\lambda(G)\geq\sqrt m$, then $G$ contains a 
copy of $K_{2,r+1}$, unless $G$ is a star 
(possibly with some isolated vertices).
\end{thm}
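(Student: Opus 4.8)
The plan is to push the standard ``Perron weight plus walk counting'' argument underlying Theorem~\ref{Thm:N09} far enough to pin down the star. We may assume throughout that $G$ is connected and has no isolated vertices (handling the general case is routine). Let $x$ be the Perron eigenvector of $G$ and choose a vertex $u$ with $x_u=\max_v x_v=1$; write $c(u,w)=|N(u)\cap N(w)|$ and $\overline{N[u]}=V(G)\setminus N[u]$. Reading off the $u$-entry of $A^2x=\lambda^2x$ gives $\lambda^2=d(u)+\sum_{w\neq u}c(u,w)x_w$, and double counting the edges meeting $N(u)$ rewrites $d(u)+\sum_{w\neq u}c(u,w)=\sum_{z\in N(u)}d(z)=m+e(G[N(u)])-e(G[\overline{N[u]}])$. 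Hence
\[ \lambda^2 \;=\; m+e(G[N(u)])-e(G[\overline{N[u]}])-\sum_{w\neq u}c(u,w)(1-x_w), \]
and since every $x_w\leq 1$ the hypothesis $\lambda\geq\sqrt m$ forces $e(G[N(u)])\geq e(G[\overline{N[u]}])+\sum_{w\neq u}c(u,w)(1-x_w)\geq 0$. We will also use the easy facts $d(u)\geq\lambda$ (from $\lambda x_u=\sum_{v\sim u}x_v$) and $d(u)\leq\lambda^2$ (from $(A^2x)_u\geq d(u)x_u$).

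Next I record the structural input: if $G$ is $K_{2,r+1}$-free, then every vertex of $N(u)$ has at most $r$ neighbours inside $N(u)$, and every vertex of $\overline{N[u]}$ has at most $r$ neighbours in $N(u)$ (otherwise $u$ together with that vertex would span a $K_{2,r+1}$); in particular $e(G[N(u)])\leq \tfrac{1}{2}r\,d(u)$. Now dispose of the case $e(G[N(u)])=0$. The displayed identity then forces $e(G[\overline{N[u]}])=0$ and $x_w=1$ for every vertex $w$ at distance $2$ from $u$; since $\overline{N[u]}$ becomes an independent set joined only to $N(u)$, \emph{every} vertex of $\overline{N[u]}$ is such a $w$. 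If $\overline{N[u]}=\emptyset$ then $G=K_{1,d(u)}$ is a star and we are done. Otherwise pick $z\in\overline{N[u]}$ with $x_z=1$ and rerun the whole argument with $z$ in the role of $u$ (legitimate since $x_z$ is also maximal): from $N(z)\subseteq N(u)$ we get $e(G[N(z)])=0$, hence $e(G[\overline{N[z]}])=0$, which forces $N(z)=N(u)$ (else some edge at $u$ lies inside $\overline{N[z]}$); but then $u$ and $z$ have $d(u)\geq\lambda\geq\sqrt m\geq 4r>r$ common neighbours, a forbidden $K_{2,r+1}$. So, assuming $G$ is not a star, we may henceforth take $e(G[N(u)])\geq 1$.

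The remaining task — deriving a contradiction from $e(G[N(u)])\geq 1$ together with $G$ not a star and $m\geq 16r^2$ — is the heart of the matter. The available inputs are $d(u)\leq\lambda^2\leq m+e(G[N(u)])$, the bound $e(G[N(u)])\leq \tfrac{1}{2}r\,d(u)$, the trivial bound $e(G[N(u)])\leq m-d(u)$, and the strengthened inequality $e(G[N(u)])\geq e(G[\overline{N[u]}])+\sum_{w\neq u}c(u,w)(1-x_w)$. I would split on the size of $d(u)$. If $d(u)$ is close to $m$, then $G$ is a near-star, and evaluating $x$ on the few edges lying off the star shows directly that $\lambda<\sqrt m$. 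If $d(u)$ is bounded away from $m$, then a definite proportion of the $m$ edges must sit in $e(N(u),\overline{N[u]})+e(G[\overline{N[u]}])$, and using the codegree bounds above to control the entries $x_w$ appearing in the last sum makes the right-hand side of the strengthened inequality strictly exceed $\tfrac{1}{2}r\,d(u)\geq e(G[N(u)])$ — a contradiction. The real obstacle is making this balancing quantitatively sharp: the constant $16$ is precisely what is needed for the two regimes to meet for \emph{every} $m\geq 16r^2$, so the numerology (which also governs the disconnected reduction mentioned at the outset) must be tracked carefully throughout. Everything else is the routine bookkeeping sketched above.
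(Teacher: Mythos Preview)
First, note that Theorem~\ref{Thm:ZLS21} is not proved in this paper at all: it is quoted from \cite{ZLS21} as background, so there is no ``paper's own proof'' to compare your attempt against.

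As for your proposal itself, the setup (the identity $\lambda^2=m+e(G[N(u)])-e(G[\overline{N[u]}])-\sum_{w\neq u}c(u,w)(1-x_w)$) and your handling of the case $e(G[N(u)])=0$ are correct and clean. The genuine gap is precisely what you yourself label ``the heart of the matter'': the case $e(G[N(u)])\geq 1$. You do not carry out either branch of the dichotomy you propose. In the ``near-star'' branch you assert that ``evaluating $x$ on the few edges lying off the star shows directly that $\lambda<\sqrt m$'', but no such evaluation is performed, and the threshold separating this regime from the other is never specified. In the second branch you claim that ``using the codegree bounds above to control the entries $x_w$'' will force $\sum_{w\neq u}c(u,w)(1-x_w)$ to exceed $\tfrac12 r\,d(u)$, but the codegree bound $c(u,w)\leq r$ says nothing about $1-x_w$; an entirely separate mechanism is needed to show that many $x_w$ are bounded away from $1$, and you do not supply one. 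Your own closing sentence --- that the constant $16$ is ``precisely what is needed for the two regimes to meet'' and the numerology ``must be tracked carefully throughout'' --- is an admission that the actual content of the proof has been deferred. What you have written is a reasonable outline of the strategy used in \cite{ZLS21}, but it is not a proof: the several pages of estimates that make the two regimes meet at $m=16r^2$ are exactly what is missing.
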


Let $B_r$ be an \emph{$r$-book}, that is, the 
graph obtained from $K_{2,r}$ by adding one edge 
within the partition set of two vertices.
Very recently, Nikiforov \cite{N21} proved that, 
if $m\geq (12r)^4$ and $\lambda(G)\geq\sqrt m$, then
$G$ contains a copy of $B_{r+1}$, unless $G$ is a 
complete bipartite graph (possibly with some isolated vertices).
This result further extends above two theorems and 
solves a conjecture proposed in \cite{ZLS21}.

The central topic of this article is the following spectral radius version
of supersaturation problem
of $4$-cycles :
\begin{prob}\label{Prob:Counting4-cycles}
Let $f(m)$ be the minimum number of copies of 4-cycles over
all labelled graph $G$ on $m$ edges with $\lambda(G)>\sqrt{m}$.
Give an estimate of $f(m)$.
\end{prob}

Till now, the only counting result related to Problem \ref{Prob:Counting4-cycles}
is a consequence of Theorem \ref{Thm:ZLS21}.
Note that $K_{2,r+1}$ contains $\frac{r(r+1)}2$ 4-cycles for $r=\frac{\sqrt{m}}4.$
Theorem \ref{Thm:ZLS21} implies that $f(m)\geq \frac{m}{32}$,
unless $G$ is a star (possibly with some isolated vertices).

One may ask for the best answer to Problem \ref{Prob:Counting4-cycles}.
In this paper, we make the first progress towards this problem.

\begin{thm}\label{Thm:Mainresult}
Let $m\geq3.6\times 10^9$ be a positive integer. Then 
$f(m)\ge \frac{m^2}{2000}$, unless the graph $G$ is 
a star (possibly with some isolated vertices).
\end{thm}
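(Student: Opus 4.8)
\medskip
\noindent\textbf{Proof proposal.}
The plan is to split according to the maximum degree $\Delta=\Delta(G)$: a global closed-walk count settles the low-$\Delta$ case, while a local eigenvector analysis settles the ``dominant vertex'' case by producing either two vertices with $\Omega(m)$ common neighbours or a small subgraph of large spectral radius. First a routine reduction: isolated vertices may be deleted, and if $G$ is disconnected we may pass to the component $G'$ with $\lambda(G')=\lambda(G)$, which is connected, is not a star (otherwise $\lambda(G)^2=|E(G')|\le m$, contradicting $\lambda(G)>\sqrt m$), and satisfies $\lambda(G')^2>m\ge |E(G')|$. Since $\lambda^2>m$ forces $m<\lambda^2\le 2m$, it suffices to prove, for connected non-star graphs with $\lambda>\sqrt m$, the scale-correct bound $\#C_4(G)\ge\lambda^4/2000$; this transfers back because $\#C_4(G)\ge\#C_4(G')$ and $\lambda(G')^4>m^2$.

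Assume $G$ connected. Counting closed walks of length four gives $\operatorname{tr}(A^4)=8\,\#C_4+2\sum_v d(v)^2-2m$, and $\operatorname{tr}(A^4)=\sum_i\lambda_i^4\ge\lambda^4$, hence (using $\sum_v d(v)^2\le 2m\Delta$)
\[
8\,\#C_4\ \ge\ \lambda^4-2\sum_v d(v)^2+2m\ \ge\ \lambda^4-4m\,\Delta+2m .
\]
If $\Delta\le\lambda^2/5$ then, since $2m<2\lambda^2$, the right-hand side is at least $\lambda^4-4\lambda^2\Delta\ge\lambda^4/5$, so $\#C_4\ge\lambda^4/40$ and we are done. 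Thus we may assume there is a vertex $z$ with $d(z)=\Delta>\lambda^2/5\ (>m/5)$ --- the \emph{dominant-vertex regime}.

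In the dominant-vertex regime I would prove the dichotomy: $G$ contains either (i) two vertices sharing at least $m/30$ common neighbours, or (ii) a subgraph $F$ with $|V(F)|\le\tfrac1{10}\sqrt m$ and $\lambda(F)\ge\sqrt m$. In case (i), $\#C_4\ge\binom{\lceil m/30\rceil}{2}\ge m^2/2000$ once $m\ge 300$. In case (ii), $\Delta(F)\le|V(F)|$ gives $\sum_{v\in F}d_F(v)^2\le 2|E(F)|\,|V(F)|\le|V(F)|^3=O(m^{3/2})$, and the closed-walk bound applied to $F$ yields $\#C_4(G)\ge\#C_4(F)\ge\bigl(\lambda(F)^4-O(m^{3/2})\bigr)/8\ge m^2/16$. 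The dichotomy is pursued through the Perron eigenvector $x$ (normalised $\|x\|_2=1$, maximum entry at $u$): one has $d(u)\ge\lambda>\sqrt m$, the identity $\lambda^2=d(u)+\sum_{w\ne u}|N(u)\cap N(w)|\,x_w$ (whence $\sum_{v\in N(u)}d(v)>m$), and, with $W=V\setminus N[u]$, the refined identity $\lambda^2-m=e(N(u))-e(W)-\sum_{w\ne u}|N(u)\cap N(w)|(1-x_w)$. Assuming (i) fails, i.e.\ all codegrees are $<m/30$, one argues that the edge-mass witnessing $\sum_{v\in N(u)}d(v)>m$ and the $\ell_2^2$-mass of $x$ must concentrate on $O(\sqrt m)$ high-entry vertices (this is forced by the presence of the dominant vertex $z$), and that the subgraph $F=G[H]$ they span, with $H=\{v:x_v\ge\theta\}$, realises (ii) via the Rayleigh-quotient estimate
\[
\lambda(G[H])\ \ge\ \lambda\cdot\frac{2\|x_H\|_2^2-1}{\|x_H\|_2^2},
\]
which holds because $\sum_{ij\in E,\ \{i,j\}\not\subseteq H}x_ix_j\le\lambda\bigl(1-\|x_H\|_2^2\bigr)$ (bound each such edge by one endpoint outside $H$ and use the eigenequation there).

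The main obstacle is making this concentration step quantitative: $\lambda^2-m$ may be as small as $\Theta(\sqrt m)$ (as for books $B_k$, for which nonetheless $\#C_4=\binom k2\asymp m^2$), so the concentration cannot be read off from the surplus $\lambda^2-m$ and must instead be extracted from the near-extremal structure. The star is precisely the configuration where the extraction breaks down --- its Perron vector spreads mass $\Theta(1)$ over each of $\Theta(m)$ leaves, no two vertices have many common neighbours, and there is no small dense subgraph --- which is why the hypothesis ``$G$ is not a star'' is indispensable. Balancing the several $O(1)$ and $O(\sqrt m)$ error terms against $m$ in this step is what forces the explicit threshold $m\ge 3.6\times10^9$; tightening these estimates is exactly what would be needed to improve the constant $1/2000$, and pinning down the truth is the problem of determining $\lim_{m\to\infty}f(m)/m^2$ left open in the paper.
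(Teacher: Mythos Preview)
Your proposal contains a genuine gap in the dominant-vertex regime. You set up a dichotomy --- either (i) two vertices share $\ge m/30$ neighbours, or (ii) there is a small subgraph $F$ with $|V(F)|\le\tfrac1{10}\sqrt m$ and $\lambda(F)\ge\sqrt m$ --- but you do not prove it; indeed you yourself flag ``making this concentration step quantitative'' as the main obstacle. The difficulty is real: to force $\lambda(G[H])\ge\sqrt m$ via your Rayleigh bound you need $\|x_H\|_2^2$ close to~$1$, while to get $|H|\le\tfrac1{10}\sqrt m$ you need the entry threshold $\theta$ to be of order $m^{-1/4}$, and nothing in your sketch reconciles these two constraints (the book examples you mention show that the surplus $\lambda^2-m$ by itself gives no leverage). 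Your ``identity'' $\lambda^2=d(u)+\sum_{w\ne u}|N(u)\cap N(w)|\,x_w$ is also off by a factor $x_u$ (the correct statement is $\lambda^2 x_u=d(u)x_u+\sum_{w\ne u}|N(u)\cap N(w)|\,x_w$), though the inequality you actually use survives since $x_w\le x_u$.

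The paper circumvents this concentration problem altogether with a different mechanism: Nikiforov's \emph{deleting small eigenvalue edges} method. One iteratively removes edges $uv$ with $x_ux_v\le 1/(9\sqrt{e(G_i)})$, checking that $\lambda(G_i)\ge\sqrt{e(G_i)}$ persists and that the resulting graph never becomes a star. Either half the edges are removed, in which case a direct computation with $M(G_k)\le e(G_k)^2+e(G_k)$ and the same $\operatorname{tr}(A^4)$ identity you use finishes; or one reaches a graph in which \emph{every} edge has Perron weight $>1/(9\sqrt m)$. This last hypothesis is exactly what replaces your unproven concentration: it forces the low-entry set $B=\{v:x_v\le\tfrac1{3}m^{-1/4}\}$ to be independent, pins $|V(G)|$ near $m/2$, bounds $\Delta(G)\le\tfrac2{15}m$, and shows that the degree-$2$ vertices are attached to at most $24$ high-entry vertices. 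These structural facts combine to give $M(G)<\tfrac49 m^2$, whence $\#C_4\ge\tfrac18\lambda^4-\tfrac14 M(G)\ge m^2/72$. The point is that the DSEE reduction manufactures a uniform lower bound on edge Perron-weights, which yields the structural rigidity that your eigenvector-mass argument is trying to extract directly but cannot.
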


Throughout the left part, we also define $f(G)$ to be the number 
of copies of 4-cycles in a graph $G$.
\begin{pro}\label{Prop}
$f(m)\leq \frac{(m-1)(m-2\sqrt{m})}8.$
\end{pro}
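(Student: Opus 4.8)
The plan is to prove this upper bound by exhibiting a single graph, since $f(m)$ is a minimum. The graph is a ``book with pendants''. For integers $k\ge 1$ and $s\ge 0$, let $G_{k,s}$ be obtained from the book $B_k$ — with dominating vertices $x,y$ (so $x\sim y$) and pages $z_1,\dots,z_k$, each $z_i$ adjacent to both $x$ and $y$ — by attaching $s$ new pendant vertices to $x$ and $s$ new pendant vertices to $y$; thus $G_{k,s}$ has $2k+2s+1$ edges. Every pendant vertex has degree $1$ and hence lies on no cycle, and a direct check shows that the $4$-cycles of $B_k$ are exactly the copies $xz_iyz_j$ with $i<j$; consequently $f(G_{k,s})=\binom k2$, independently of $s$.

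Next I would compute $\lambda(G_{k,s})$ from the equitable partition of $V(G_{k,s})$ into $\{x\}$, $\{y\}$, the set of pages, the pendants at $x$, and the pendants at $y$. Using the automorphism exchanging $x$ and $y$, the Perron vector takes a common value $a$ on $\{x,y\}$, a value $b$ on the pages, and a value $c$ on all pendants, and the eigen-equations $\lambda a=a+kb+sc$, $\lambda b=2a$ and $\lambda c=a$ reduce to $\lambda^{2}-\lambda-(2k+s)=0$, whence $\lambda(G_{k,s})=\tfrac12\bigl(1+\sqrt{1+8k+4s}\,\bigr)$. Setting $m=2k+2s+1$ and squaring, the inequality $\lambda(G_{k,s})>\sqrt m$ becomes, after a one-line manipulation, $2k>s^{2}$, i.e.\ $(s+1)^{2}<m$. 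Hence, whenever $(s+1)^{2}<m$, the graph $G_{k,s}$ has $m$ edges and $\lambda(G_{k,s})>\sqrt m$, so $f(m)\le f(G_{k,s})=\binom k2$.

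It then remains to minimize $\binom k2$ subject to $2k+2s+1=m$ and $(s+1)^{2}<m$; equivalently, to make $s$ as large as the second constraint permits. Choosing $s$ of order $\sqrt m$ forces $2k=m-1-2s=m-2\sqrt m+O(1)$, and a short estimate — using that $(m-1)(m-2\sqrt m)$ exceeds $(m-2\sqrt m)^{2}$ by a term of order $m^{3/2}$, which dominates the $O(m)$ slack — then gives $\binom k2\le\frac{(m-1)(m-2\sqrt m)}{8}$ for all large $m$ (the few small values of $m$ may be checked directly). When $m=q^{2}$ with $q$ odd, the choice $k=\binom q2$, $s=\tfrac{q-1}{2}$ is exactly feasible — here $2k+2s+1=q(q-1)+(q-1)+1=q^{2}=m$ — and yields $f(G_{k,s})=\binom{\binom q2}{2}=\frac{(m-1)(m-2\sqrt m)}{8}$, so the bound is attained with equality and the construction is essentially optimal within this family. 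The only real obstacle I foresee is arithmetic bookkeeping: for $m$ not a convenient perfect square one must exhibit an \emph{integer} $s$ with $(s+1)^{2}<m$ keeping $k$ small enough, and when $m$ is even (where $2k+2s+1$ has the wrong parity) one needs a correction — for instance, attaching one extra pendant edge to a single page $z_1$, which adds exactly one edge, creates no new $4$-cycle, and can only increase $\lambda$. None of this is conceptually hard; making the estimates uniform in $m$ is the fiddly part.
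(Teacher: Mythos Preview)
Your construction is correct and gives a genuinely different proof from the paper's. The paper simply takes $K_s^{+}$ with $s=\sqrt m+1$: a clique $K_s$ together with $m-\binom s2$ pendant edges attached at one vertex. Since $\lambda(K_s)=s-1=\sqrt m$ and adding pendants strictly increases the spectral radius, one gets $\lambda(K_s^{+})>\sqrt m$ with no computation, and the $C_4$-count is $3\binom s4=\frac{(m-1)(m-2\sqrt m)}8$. Your book-with-pendants $G_{k,s}$ lands on the same value (exactly, when $m=q^2$ with $q$ odd), but the price is the eigenvalue calculation via the equitable partition and the extra bookkeeping for the even-$m$ case; in particular, your parity fix ``attach one more pendant to a page'' guarantees $\lambda(G')>\lambda(G_{k,s})>\sqrt{m-1}$ but not $\lambda(G')>\sqrt m$, so that step would still need a small argument (e.g.\ choosing $s$ with a unit of slack in $(s+1)^2<m$).

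What your approach buys is that it is actually \emph{stronger}: pushing $s$ up to about $\sqrt m-2$ (the largest value with $(s+1)^2<m$) gives $k\approx\frac{m-2\sqrt m}{2}$ and hence $f(G_{k,s})=\binom k2\approx\frac{(m-2\sqrt m)^2}{8}$, which beats the paper's bound by a term of order $m^{3/2}$. So your remark that the choice $s=\tfrac{q-1}{2}$ is ``essentially optimal within this family'' is backwards --- larger $s$ gives fewer $4$-cycles --- but this only helps, not hurts, the inequality you are proving.
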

\begin{proof}
Let $s=\sqrt{m}+1$ and
$K_s^+$ be the graph obtained from the complete graph
$K_s$ by adding $m-{s \choose 2}$ pendent edges to one vertex of $K_s$.
Clearly, $\lambda(K_s^+)\geq \lambda(K_s)=\sqrt{m}$.
However, observe that $K_s^+$ contains ${s\choose 4}$ copies of $K_4$
and every $K_4$ contains three copies of 4-cycles.
Consequently, $f(K_s^+)=3{s\choose 4}=\frac{(m-1)(m-2\sqrt{m})}8.$
\end{proof}
Together with Theorem \ref{Thm:Mainresult} and
Proposition \ref{Prop}, one can easily find that $f(m)=\Theta(m^2)$.

Let us introduce some necessary notation and terminologies.
Let $G $ be a graph with vertex set $V(G)$ and edge set $E(G)$.
For a vertex $u\in V(G),$ we denote by $N_G(u)$ the set 
of neighbors of $u$, and by $d_G(u)$ the degree of $u$.
The symbol $G-v$ denotes the
subgraph induced by $V(G)\backslash \{v\}$ in $G$.

The paper is organized as follows. In Section \ref{Sec:Preliminaries},
we shall give some necessary preliminaries and prove a key lemma.
We present a proof of our main theorem in Section \ref{Sec:Mainproof}.
We conclude this article with one corollary and some open problems 
for further study.
\section{Preliminaries}\label{Sec:Preliminaries}

In this section, we introduce some lemmas, which will be used in
the subsequent proof.

The first lemma is known as Cauchy's Interlace Theorem.

\begin{lem}[\cite{BH12}]\label{Lem:CauchyInter}
Let $A$ be a symmetric $n\times n$
matrix and $B$ be an $r\times r$ principal submatrix of $A$ for some $r<n$. If
the eigenvalues of $A$ are $\lambda_1\geq \lambda_2\geq\cdots \geq\lambda_n$, and the eigenvalues of $B$ are
$\mu_1\geq \mu_2\geq\cdots\geq \mu_r$, then
$\lambda_i\geq \mu_i\geq \lambda_{i+n-r}$ for all $1\leq i\leq r$.
\end{lem}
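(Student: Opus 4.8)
The plan is to deduce the statement from the variational (Courant--Fischer) characterization of eigenvalues, reducing first to the case $r=n-1$ and then iterating. Recall that for a symmetric $n\times n$ matrix $A$ with eigenvalues $\lambda_1\ge\cdots\ge\lambda_n$ and Rayleigh quotient $R_A(x)=\frac{x^{\top}Ax}{x^{\top}x}$ one has $\lambda_i=\max_{\dim S=i}\min_{0\ne x\in S}R_A(x)=\min_{\dim T=n-i+1}\max_{0\ne x\in T}R_A(x)$, with extremizers spanned by eigenvectors. I would take this as known, or, to stay self-contained, reprove the two instances needed below directly from an eigenbasis, as in the argument that follows.

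First I would reduce to deleting a single index. If $B$ is an $r\times r$ principal submatrix of $A$ with $r<n-1$, pick a chain $A=A_n\supseteq A_{n-1}\supseteq\cdots\supseteq A_r=B$ in which each $A_k$ is a $k\times k$ principal submatrix of $A_{k+1}$, and write $\lambda^{(k)}_1\ge\cdots\ge\lambda^{(k)}_k$ for the spectrum of $A_k$. Granting the one-step inequalities $\lambda^{(k+1)}_i\ge\lambda^{(k)}_i$ and $\lambda^{(k)}_i\ge\lambda^{(k+1)}_{i+1}$, I would chain the first from $k=n-1$ down to $k=r$ to obtain $\lambda_i\ge\mu_i$, and chain the second, where the index climbs by one at each of the $n-r$ steps, to obtain $\mu_i\ge\lambda_{i+(n-r)}$. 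Thus it remains to treat $r=n-1$.

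For the one-step case, after a common permutation of rows and columns (which alters neither spectrum) I may assume $B$ is the leading $(n-1)\times(n-1)$ block of $A$, and I identify $\mathbb{R}^{n-1}$ with $W=\{x\in\mathbb{R}^n:x_n=0\}$, using the one key fact that $R_A(x)=R_B(x)$ for every nonzero $x\in W$. Let $u_1,\dots,u_n$ be an orthonormal eigenbasis with $Au_j=\lambda_j u_j$; then $R_A\ge\lambda_{i+1}$ on $\mathrm{span}(u_1,\dots,u_{i+1})$ and $R_A\le\lambda_i$ on $\mathrm{span}(u_i,\dots,u_n)$. Since $W$ has codimension $1$, the subspace $W\cap\mathrm{span}(u_1,\dots,u_{i+1})$ has dimension at least $i$; choosing an $i$-dimensional subspace $S$ inside it and using the max-min formula for $\mu_i$ over $i$-dimensional subspaces of $W$ gives $\mu_i\ge\min_{0\ne x\in S}R_B(x)\ge\lambda_{i+1}$. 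Symmetrically, $W\cap\mathrm{span}(u_i,\dots,u_n)$ has dimension at least $n-i$; choosing an $(n-i)$-dimensional subspace $T$ inside it and using the min-max formula for $\mu_i$ gives $\mu_i\le\max_{0\ne x\in T}R_B(x)\le\lambda_i$. Hence $\lambda_{i+1}\le\mu_i\le\lambda_i$, which is exactly the case $r=n-1$.

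The argument is entirely standard linear algebra, so there is no genuine obstacle; the only points needing care are (i) the bookkeeping in the chaining step, ensuring the unit index-shifts accumulate to exactly $n-r$ so that one lands on $\lambda_{i+n-r}$, and (ii) checking in the one-step case that the intersections with the codimension-one subspace $W$ really have at least the dimension demanded by the min-max formulas — which they do, precisely because removing one coordinate lowers dimension by one.
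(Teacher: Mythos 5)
Your proof is correct. Note, however, that the paper does not prove this lemma at all: it is quoted as a known result (Cauchy's Interlace Theorem) with a citation to the textbook of Brouwer and Haemers, so there is no internal proof to compare against. Your argument is the standard one via Courant--Fischer: the dimension counts are right ($W\cap\mathrm{span}(u_1,\dots,u_{i+1})$ has dimension at least $i$ and $W\cap\mathrm{span}(u_i,\dots,u_n)$ has dimension at least $n-i$, since $W$ has codimension $1$), the one-step conclusion $\lambda_{i+1}\le\mu_i\le\lambda_i$ is what you need, and the chaining bookkeeping does land on $\lambda_{i+n-r}$ because the index shifts by exactly one in each of the $n-r$ deletion steps (valid throughout since $i\le r$). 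For comparison, the proof in the cited reference establishes interlacing in one shot for any compression $B=S^{\top}AS$ with $S^{\top}S=I$ (principal submatrices being the special case where $S$ consists of standard basis vectors), again by Rayleigh-quotient/dimension arguments, so it avoids the induction on deleted indices; your reduction to single-row deletion plus chaining is a slightly more elementary but equally standard route, at the cost of the extra bookkeeping you already flagged.
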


The following inequality is due to Hofmeistar.

\begin{lem}[\!\!\cite{H88}]\label{lem2}
Let $G$ be a graph of order $n$ and $M(G)=\sum_{u\in V(G)}d^2_G(u)$.
Then
\begin{eqnarray}\label{eq1}
\lambda(G)\geq\sqrt{\frac1n M(G)},
\end{eqnarray}
with equality if and only if $G$ is either regular or bipartite semi-regular.
\end{lem}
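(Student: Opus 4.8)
The inequality itself is a one-line Rayleigh-quotient computation, so my plan is to set that up first and then concentrate the effort on the equality characterization. Let $A=A(G)$ be the adjacency matrix and $\mathbf{1}\in\mathbb{R}^n$ the all-ones vector. Since $A$ is a nonnegative symmetric matrix, Perron--Frobenius gives that its spectral radius is $\lambda_1(A)=\lambda(G)$, so the eigenvalues of $A^2$ are the squares $\lambda_i(A)^2$ and the largest of them is $\lambda(G)^2$. Applying the Rayleigh bound for $\lambda_1(A^2)$ to the test vector $\mathbf{1}$, and using that $A\mathbf{1}$ is exactly the degree vector $(d_G(u))_{u\in V(G)}$ together with $A^{\top}=A$, one gets
\[
\lambda(G)^2=\lambda_1(A^2)\ \ge\ \frac{\mathbf{1}^{\top}A^2\mathbf{1}}{\mathbf{1}^{\top}\mathbf{1}}=\frac{(A\mathbf{1})^{\top}(A\mathbf{1})}{n}=\frac{1}{n}\sum_{u\in V(G)}d_G(u)^2=\frac{M(G)}{n},
\]
which is exactly \eqref{eq1}. (If $G$ has no edge, both sides vanish and $G$ is $0$-regular, so we may assume $G$ has an edge.)

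For the equality case, the key observation is that equality in the Rayleigh bound forces $\mathbf{1}$ to be an eigenvector of $A^2$ for its top eigenvalue $\lambda(G)^2$; equivalently, expanding $\mathbf{1}$ in an orthonormal eigenbasis of $A$, only eigenvectors with eigenvalue $\pm\lambda(G)$ can occur, i.e. $\mathbf{1}\in\ker(A-\lambda(G)I)\oplus\ker(A+\lambda(G)I)$. I would first treat $G$ connected. Then $\lambda(G)$ is a simple eigenvalue with a strictly positive eigenvector $v$, and $-\lambda(G)$ is an eigenvalue precisely when $G$ is bipartite. If $G$ is non-bipartite, the eigenspace above equals $\langle v\rangle$, so $\mathbf{1}$ is itself a $\lambda(G)$-eigenvector and $A\mathbf{1}=\lambda(G)\mathbf{1}$ says every degree equals $\lambda(G)$: $G$ is regular. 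If $G$ is bipartite with parts $X,Y$, the eigenspace is two-dimensional, spanned by $v$ and the vector $v'$ equal to $v$ on $X$ and $-v$ on $Y$; writing $\mathbf{1}=\alpha v+\beta v'$ and reading off coordinates shows $v$ is constant on $X$ and constant on $Y$, and then feeding this into $Av=\lambda(G)v$ forces all vertices of $X$ to share one degree and all vertices of $Y$ to share another, i.e. $G$ is bipartite semi-regular. Conversely, a direct count gives $M(G)/n=d^2=\lambda(G)^2$ for a $d$-regular graph, and $M(G)/n=d_1d_2=\lambda(G)^2$ for a bipartite semi-regular graph with part-degrees $d_1,d_2$, so equality does hold in both cases.

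The only genuine subtlety, and the step I would be most careful about, is the bipartite branch: extracting ``$v$ is constant on each side'' from membership of $\mathbf{1}$ in the two-dimensional top eigenspace of $A^2$, and then converting that into biregularity via the eigenvalue equation, while keeping straight that the clean dichotomy ``regular or bipartite semi-regular'' is the intended reading for connected $G$ (for disconnected $G$ one reduces to the components, each of which must attain equality with the common value $\lambda(G)^2$). The inequality direction and the converse computations are entirely routine.
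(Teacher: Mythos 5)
The paper offers no proof of this lemma at all --- it is quoted directly from Hofmeister \cite{H88} --- so there is no in-paper argument to compare yours against; I can only assess your proof on its own terms, and it is correct. The Rayleigh-quotient step is exactly right: with $\mathbf{1}$ as a test vector for $A^2$ one gets $\lambda(G)^2\ge\frac1n\mathbf{1}^{\top}A^2\mathbf{1}=\frac1n\|A\mathbf{1}\|^2=\frac1n M(G)$, and your equality analysis for connected $G$ is sound: equality forces $\mathbf{1}\in\ker(A-\lambda I)\oplus\ker(A+\lambda I)$, which in the non-bipartite case makes $\mathbf{1}$ a Perron vector (hence $G$ regular), and in the bipartite case, writing $\mathbf{1}=\alpha v+\beta v'$ with $v'$ the sign-flipped Perron vector, the coordinates on the two sides give $\alpha\pm\beta\neq0$, so $v$ is constant on each part and the eigenvalue equation yields biregularity; the converse computations ($\lambda^2=d^2$, resp.\ $\lambda^2=d_1d_2=M/n$) are routine as you say. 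The one caveat is the disconnected case, which you correctly flag: as literally stated, ``equality iff regular or bipartite semi-regular'' is only exact for connected graphs --- for instance $C_4\cup K_{1,4}$ has $\lambda=2$ and $M(G)/n=36/9=4$, so equality holds, yet it is neither regular nor bipartite semi-regular --- and your component-wise reduction (each component must attain equality with the common value $\lambda(G)^2$) is the right general formulation. This imprecision lies in the quoted statement rather than in your argument, and it is immaterial here, since the paper only ever uses the inequality direction, in the form $M(G)\le n\lambda(G)^2$, in the proof of Claim \ref{cl3}.
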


\begin{lem}[\!\!\cite{LL09}]\label{lem3}
Let $G$ be a graph of order $n$ and size $m$. Then
\begin{eqnarray}\label{eq2}
f(G)=\frac18\sum_{i=1}^n\lambda_i^4+\frac m4-\frac14M(G),
\end{eqnarray}
where $\lambda_1,\ldots,\lambda_n$ are the eigenvalues of 
$G$ with $\lambda_1\geq\lambda_2\geq\cdots\geq\lambda_n.$
\end{lem}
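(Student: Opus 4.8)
The plan is to compute the number of closed walks of length $4$ in $G$ in two ways. On one hand this number equals $\mathrm{tr}(A^4)=\sum_{i=1}^n\lambda_i^4$, since $A$ is a symmetric matrix. On the other hand I would enumerate closed walks $v_0v_1v_2v_3v_0$ (with $v_0v_1,v_1v_2,v_2v_3,v_3v_0\in E(G)$) according to the coincidence pattern of the two ``diagonal'' pairs $\{v_0,v_2\}$ and $\{v_1,v_3\}$ — these are the only pairs among $v_0,v_1,v_2,v_3$ that are allowed to be equal, since consecutive vertices of a walk in a simple graph are distinct.

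Concretely, I would split into three cases. If $v_0=v_2$, the walk is $v_0\,v_1\,v_0\,v_3\,v_0$ and is determined by choosing $v_0$ together with two (not necessarily distinct) neighbours $v_1,v_3$ of it, so this case contributes $\sum_v d_G(v)^2=M(G)$. If $v_0\neq v_2$ but $v_1=v_3$, the walk is $v_0\,v_1\,v_2\,v_1\,v_0$, determined by choosing $v_1$ together with an ordered pair of distinct neighbours $v_0,v_2$ of it, contributing $\sum_v d_G(v)(d_G(v)-1)=M(G)-2m$. Finally, if $v_0\neq v_2$ and $v_1\neq v_3$, then $v_0,v_1,v_2,v_3$ are four distinct vertices spanning a copy of $C_4$, and conversely each $4$-cycle is traversed by exactly $4\cdot2=8$ such closed walks (four choices of starting vertex, two orientations), contributing $8f(G)$. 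Adding these up gives $\sum_{i=1}^n\lambda_i^4=\mathrm{tr}(A^4)=M(G)+(M(G)-2m)+8f(G)=2M(G)-2m+8f(G)$, which rearranges to the asserted identity $f(G)=\frac18\sum_{i=1}^n\lambda_i^4+\frac m4-\frac14M(G)$.

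There is no serious obstacle here; the only points requiring a little care are checking that the three cases are exhaustive and pairwise disjoint (they are, being governed by whether $v_0=v_2$ and whether $v_1=v_3$) and getting the multiplicity $8$ right in the last case. An equivalent purely algebraic route is to write $\mathrm{tr}(A^4)=\sum_{i,j}(A^2)_{ij}^2$, observe that $(A^2)_{ii}=d_G(i)$ while $(A^2)_{ij}$ is the number of common neighbours of $i$ and $j$ for $i\neq j$, and then combine $\sum_{\{i,j\}}\binom{(A^2)_{ij}}{2}=2f(G)$ with $\sum_{\{i,j\}}(A^2)_{ij}=\sum_v\binom{d_G(v)}{2}=\frac12(M(G)-2m)$; I expect the combinatorial count of closed $4$-walks to be the cleanest to write out.
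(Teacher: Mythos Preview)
Your argument is correct and is the standard derivation of this identity: counting closed walks of length four via $\mathrm{tr}(A^4)=\sum_i\lambda_i^4$ and splitting according to whether $v_0=v_2$ and whether $v_1=v_3$ gives exactly $2M(G)-2m+8f(G)$, which rearranges to the stated formula. The paper itself does not prove this lemma; it is quoted from \cite{LL09} without argument, so there is nothing further to compare.
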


The following result is well-known \cite{N70,BFP08}. A short 
proof can also be found in \cite{N17}.
\begin{lem}[\!\!\cite{N70,BFP08,N17}]\label{lem5}
Let $G$ be a bipartite graph with $m$ edges, where $m\geq1$. Then
$\lambda(G)\leq\sqrt{m},$
with equality if and only if $G$ is a complete bipartite 
graph (possibly with some isolated vertices).
\end{lem}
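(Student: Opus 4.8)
The plan is to run everything through the spectrum of the adjacency matrix $A=A(G)$. First I would delete all isolated vertices of $G$, since this changes neither $m$ nor $\lambda(G)$ (it merely removes zero eigenvalues); so I may assume $G$ has no isolated vertex, with eigenvalues $\lambda_1\ge\lambda_2\ge\cdots\ge\lambda_n$ and $\lambda_1=\lambda(G)$. Because $G$ is bipartite its spectrum is symmetric about the origin, so $\lambda_n=-\lambda_1$; moreover $\sum_{i=1}^n\lambda_i^2=\operatorname{tr}(A^2)=\sum_{v\in V(G)}d_G(v)=2m$. Hence
\[
2\lambda(G)^2=\lambda_1^2+\lambda_n^2\le\sum_{i=1}^n\lambda_i^2=2m,
\]
which already yields $\lambda(G)\le\sqrt m$. (Equivalently, one may write $A=\begin{pmatrix}0&B\\ B^{\top}&0\end{pmatrix}$ in bipartite block form, note that $\lambda(G)$ equals the largest singular value $\sigma_1(B)$, and use $\sigma_1(B)^2\le\sum_i\sigma_i(B)^2=\|B\|_F^2=m$.)

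For the equality statement, assume $\lambda(G)=\sqrt m$. Then equality holds throughout the display, which forces $\lambda_i=0$ for all $2\le i\le n-1$; thus the spectrum of $G$ is exactly $\{\lambda_1,0,\dots,0,-\lambda_1\}$ and $A$ has rank at most $2$. Since every nontrivial connected component of a bipartite graph contributes both a positive and a negative eigenvalue, having only the two nonzero eigenvalues $\pm\lambda_1$ shows that $G$ (with isolated vertices removed) has a unique nontrivial component, i.e.\ $G$ is connected. By Perron--Frobenius, $\lambda_1$ is then simple with a strictly positive unit eigenvector $x=(y,z)$ written along the bipartition, and $x'=(y,-z)$ is a unit eigenvector for $-\lambda_1$; the spectral decomposition $A=\lambda_1 xx^{\top}-\lambda_1 x'x'^{\top}$ gives off-diagonal block $B=2\lambda_1\,y z^{\top}$, a rank-one $0/1$ matrix all of whose entries are strictly positive. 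Such a matrix must be the all-ones matrix, so $G$ is a complete bipartite graph $K_{a,b}$. Conversely $\lambda(K_{a,b})=\sqrt{ab}=\sqrt m$, and re-inserting isolated vertices changes nothing; this closes the characterization.

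The trace identity and the singular-value bound are routine; the only point needing care is the equality analysis — excluding a second nontrivial component and checking that a rank-one $0/1$ matrix with positive entries is all-ones — but both are short, so I do not anticipate a genuine obstacle. The lemma is classical (it goes back to Nosal), and the above is just the streamlined version of that argument.
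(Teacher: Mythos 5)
Your proof is correct: the trace bound $2\lambda_1^2\le\sum_i\lambda_i^2=2m$ via the symmetry of the bipartite spectrum gives the inequality, and your equality analysis (all middle eigenvalues vanish, so $\lambda_1$ is simple, forcing a single nontrivial component, and then the rank-two spectral decomposition makes the biadjacency block a positive rank-one $0/1$ matrix, hence all-ones) is sound, including the converse check for $K_{a,b}$. The paper does not prove this lemma — it quotes it from Nosal, Bhattacharya--Friedland--Peled and Ning — and your argument is essentially the standard proof found in those sources, so there is nothing to add.
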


We need prove the last lemma. A proof of its one special 
case that $m\leq n-2$ can be found in \cite[Lemma~2.4]{IS02}.
\begin{lem}\label{Lem:degreesum}
Let $G$ be a graph with $m$ edges. Then
$M(G)\leq m^2+m.$
\end{lem}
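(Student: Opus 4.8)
The plan is to convert the sum of squared degrees into a sum over edges and then bound each edge's contribution by $m+1$.

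First I would invoke the standard identity
\[
M(G)=\sum_{u\in V(G)}d_G(u)^2=\sum_{uv\in E(G)}\bigl(d_G(u)+d_G(v)\bigr),
\]
which follows by writing $d_G(u)^2=\sum_{v\in N_G(u)}d_G(u)$ and summing over all $u\in V(G)$; in the resulting double sum each edge $uv$ is counted once with weight $d_G(u)$ (from the vertex $u$) and once with weight $d_G(v)$ (from the vertex $v$). Isolated vertices contribute $0$ and cause no difficulty.

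Next, I would fix an edge $uv\in E(G)$ and estimate $d_G(u)+d_G(v)$. The set of edges incident with $u$ and the set of edges incident with $v$ together form a subset of $E(G)$, and these two sets intersect in exactly the single edge $uv$. Hence their union has size $d_G(u)+d_G(v)-1$, so $d_G(u)+d_G(v)-1\le m$, i.e. $d_G(u)+d_G(v)\le m+1$. Summing this bound over all $m$ edges of $G$ then gives $M(G)=\sum_{uv\in E(G)}\bigl(d_G(u)+d_G(v)\bigr)\le m(m+1)=m^2+m$, which is the claimed inequality.

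I do not expect a real obstacle here: the argument is elementary and works in full generality, with no need to compare $m$ and $n$ as in the special case cited from \cite{IS02}. The only points deserving care are the double-counting bookkeeping in the identity and the observation that, for an edge $uv$, the edge sets incident to $u$ and to $v$ overlap in precisely that one edge (so one subtracts $1$, not $2$, and not $0$). If desired, one could also record the equality condition, which forces every edge of $G$ to be incident with all other edges, but the statement as given does not require this.
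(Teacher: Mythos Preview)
Your proof is correct. The identity $M(G)=\sum_{uv\in E(G)}(d_G(u)+d_G(v))$ is standard, and the edge-incidence count giving $d_G(u)+d_G(v)\le m+1$ for every edge $uv$ is exactly right; summing over the $m$ edges yields the bound immediately.

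Your route is genuinely different from the paper's. The paper argues extremally: it takes a graph $G$ maximizing $M(G)$ among graphs with $m$ edges, and through a sequence of edge-switching moves shows that one may reduce to the star $K_{1,m}$, for which $M=m^2+m$. This requires several cases (forcing the top-degree vertex to be universal, then splitting on the size of $e(G-u_1)$) and a final comparison inequality. Your double-counting argument bypasses all of this structural analysis in two lines. What the paper's approach buys is an explicit identification of the extremal graph, whereas your inequality $d_G(u)+d_G(v)\le m+1$ is tight at every edge only when $G$ is a star (or a triangle), so the same extremal characterization is implicitly available but not spelled out. For the purposes of this lemma as stated, your argument is shorter and fully sufficient.
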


\begin{proof}
Let $G$ be an extremal graph with the maximum $M(G)$ and $n:=|G|$.
Let $V(G)=\{u_1,\ldots,u_{n}\}$, and $d_i:=d_G(u_i)$ for each $u_i\in V(G)$.
We may assume that $d_1\geq\cdots\geq d_{n}\geq1$.

If there exists some integer $i\geq2$ such that $u_iu_1\notin E(G)$,
then we choose a vertex $u_j\in N_G(u_i)$ and define
$G':=G-u_iu_j+u_iu_1$.
Now $d_{G'}(u_1)=d_1+1,$ $d_{G'}(u_j)=d_j-1$ and $d_{G'}(u_k)=d_G(u_k)$ for
each $k\in\{2,\ldots,n\}\setminus\{j\}.$
Consequently,
\begin{eqnarray*}
M(G')-M(G)=(d_1+1)^2+(d_j-1)^2-d_1^2-d_j^2=2d_1-2d_j+2\geq2,
\end{eqnarray*}
a contradiction. Thus, $N_G(u_1)=V(G)\setminus\{u_1\}$, 
and so $d_1=n-1$.

Now let $e(G-u_1)$ be the number of edges in $G-u_1$.
Clearly, $e(G-u_1)=m-d_1$.
If $e(G-u_1)=0$, then $G\cong K_{1,m}$,
and so $\sum_{i=1}^{n}d^2_i=m^2+m$, as desired.
In the following, we assume $e(G-u_1)\geq1$.

If $e(G-u_1)\leq d_1-2$,
then $d_i+d_j\leq e(G-u_1)+3\leq d_1+1$ for each $u_iu_j\in E(G-u_1)$.
Now let $G'=G-u_iu_j+u_1u_0$, where $u_iu_j\in E(G-u_1)$ and
$u_0$ is a new vertex adjacent only to $u_1$ in $G'$.
Then
\begin{eqnarray*}
M(G')-M(G)\!\!&=&\!\! (d_1+1)^2+1+(d_i-1)^2+(d_j-1)^2-d_1^2-d_i^2-d_j^2\\
\!\!&=&\!\! 2(d_1-d_i-d_j)+4.
\end{eqnarray*}
It follows that $M(G')>M(G)$, a contradiction.
Therefore, $e(G-u_1)\geq d_1-1$.

Now let $e(G-u_1)=k$ and define a new graph $G':=K_{1,d_1+k}$.
Then $k\geq d_1-1$ and $e(G')=d_1+k=e(G)=m$.
Note that $n=d_1+1$ and $2k=2e(G-u_1)=\sum_{i=2}^{n}(d_i-1)$.
Hence, $2kd_1\geq \sum_{i=2}^{n}d_i^2-d_1^2=M(G)-2d_1^2.$
It follows that
\begin{eqnarray*}
M(G')-M(G)=
(k+d_1)^2+(k+d_1)-M(G)
\geq k^2-d_1^2+(k+d_1)\geq0,
\end{eqnarray*}
as $k\geq d_1-1$.
Thus, $M(G)\leq M(G')=m^2+m$.
This proves Lemma \ref{Lem:degreesum}.
\end{proof}

\section{Proof of Theorem \ref{Thm:Mainresult}}\label{Sec:Mainproof}

In this section, we give a proof of Theorem \ref{Thm:Mainresult}.
We would like to point out that the techniques used in the left part
are completely different from \cite{NZ21+}.

\subsection{A key lemma}
We first prove a key lemma.

\begin{lem}\label{lem7}
Let $G$ be a graph of size $m\geq1.8\times10^9$ and
$X$ be the
Perron vector of $G$ with component $x_u$ corresponding to $u\in V(G)$.
If $\lambda(G)\ge
\sqrt{m}$ and $x_ux_v>\frac1{9\sqrt{m}}$ for any $uv\in E(G)$, then $f(G)\ge \frac{m^2}{500}$ unless
$G$ is a star (possibly with some isolated vertices).
\end{lem}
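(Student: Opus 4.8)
The plan is to rewrite $f(G)$ spectrally via Lemma~\ref{lem3}, reduce to a near-extremal situation using Lemmas~\ref{lem2} and~\ref{Lem:degreesum}, and then exploit the Perron-weight hypothesis to control the structure of $G$. By Lemma~\ref{lem3} and Lemma~\ref{Lem:degreesum},
\[ 8f(G)=\sum_{i=1}^{n}\lambda_i^{4}-2M(G)+2m\ \ge\ \sum_{i=1}^{n}\lambda_i^{4}-2m^{2}, \]
so it suffices to prove $\sum_i\lambda_i^{4}\ge 2m^{2}+\tfrac{m^{2}}{62.5}=2.016\,m^{2}$ (note $\tfrac{8}{500}=\tfrac1{62.5}$). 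Since $\sum_i\lambda_i^{4}\ge\lambda_1^{4}=\lambda(G)^{4}\ge m^{2}$, this is immediate once $\lambda(G)^{2}\ge1.42\,m$; hence from now on $m\le\lambda^{2}<1.42\,m$ with $\lambda:=\lambda(G)$, so $G$ is close to the extremal configuration of Lemma~\ref{lem5}.

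Next I extract structure from the hypothesis. From $\lambda x_u=\sum_{v\sim u}x_v$ and Cauchy--Schwarz, $x_u\le\sqrt{d(u)}/\lambda\le\sqrt{d(u)/m}$, so for every edge $uv$ we get $\tfrac1{9\sqrt m}<x_ux_v\le\sqrt{d(u)d(v)}/\lambda^{2}\le\sqrt{d(u)d(v)}/m$, that is $d(u)d(v)>m/81$. Thus $T:=\{u:d(u)>\sqrt m/9\}$ is a vertex cover of $G$, and $\sum_{u\in T}d(u)\le2m$ forces $|T|<18\sqrt m$; in particular $V\setminus T$ is independent, $G$ has at most $2|T|<36\sqrt m$ nonzero eigenvalues, and (discarding isolated vertices, which changes nothing) $n\le m+|T|<m+18\sqrt m$. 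Two local consequences will be used: if $uv$ is an edge and $x_u\le c/\sqrt m$ then $x_v>\sqrt m/(9c)$, so a vertex whose Perron weight is of ``leaf size'' has only boundedly many neighbours among vertices of non-small Perron weight; and a vertex with a degree-$1$ neighbour has Perron weight $>\tfrac13$ (since $x_u^2=\lambda x_ux_w>\lambda/(9\sqrt m)\ge\tfrac19$), so at most $8$ vertices can have pendant neighbours.

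Now Lemma~\ref{lem2} gives $M(G)\le n\lambda^{2}$. If $n\le0.49\,m$, then as $\lambda^{2}\ge m\ge n$ and $\mu\mapsto\mu^{2}-2n\mu$ is increasing for $\mu\ge m$,
\[ 8f(G)\ \ge\ \lambda^{4}-2n\lambda^{2}+2m\ \ge\ m^{2}-2nm+2m\ >\ \tfrac{m^{2}}{62.5}, \]
and we are done; this settles, in particular, the case where $G$ lives essentially on $T$. Hence assume $n>0.49\,m$ (and still $m\le\lambda^{2}<1.42\,m$). Then $V\setminus T$ is an independent set with $|V\setminus T|$ a positive fraction of $m$, the bipartite subgraph $H:=G[T,V\setminus T]$ carries $e(H)=m-e(G[T])$ edges with $e(H)\ge|V\setminus T|=\Omega(m)$ and $\lambda(H)\le\sqrt{e(H)}\le\sqrt m$ by Lemma~\ref{lem5}, and $\lambda\le\lambda(G[T])+\lambda(H)\le\sqrt{2e(G[T])}+\sqrt{e(H)}$, with $G[T]$ sitting on $<18\sqrt m$ vertices.

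In this last regime $\lambda\ge\sqrt m$ forces $\lambda(H)$ to be within $\sqrt{2e(G[T])}$ of $\sqrt{e(H)}\le\sqrt m$: either $e(G[T])$ is a positive fraction of $m$, in which case $G[T]$ is a dense graph on $<18\sqrt m$ vertices and has $\Omega(m^2)$ quadrilaterals by supersaturation (with Cauchy interlacing, Lemma~\ref{Lem:CauchyInter}, available to turn density into eigenvalue separation), or $H$ is nearly extremal for Lemma~\ref{lem5} and hence close to a complete bipartite $K_{a,b}$ with $ab\approx m$ and $a<18\sqrt m$, $b\approx m/a$. If $a\ge2$ then $G$ contains $\binom a2\binom b2=\Omega(m^{2})$ copies of $C_4$. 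If $a=1$ then $H$ is close to a star, so some $p$ has $d_G(p)=(1-o(1))m$ and only $o(m)$ edges avoid $p$; by the local facts above each such edge has an endpoint of large Perron weight (a ``hub''), there are only boundedly many hubs, and — since $G$ is not a star — either some hub shares $\Omega(m)$ common neighbours with $p$, giving $K_{2,\Omega(m)}$ and $\Omega(m^2)$ quadrilaterals, or no edge avoids $p$, i.e.\ $G$ is a star. Quantitatively one counts through the cover: every $4$-cycle has a diagonal pair inside $T$, so $f(G)\ge\tfrac12\sum_{\{p,q\}\subseteq T}\binom{|N(p)\cap N(q)|}{2}$, and one lower-bounds $\sum_{w}\binom{|N(w)\cap T|}{2}=\sum_{\{p,q\}\subseteq T}|N(p)\cap N(q)|$ from the structural picture and finishes by convexity. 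I expect the genuine obstacle to be exactly this last step — turning ``$\lambda(G)\ge\sqrt m$ and $G$ is not a star'' into a quantitative $\Omega(m^{2})$ lower bound, concentrated on boundedly many pairs of $T$, via a careful local analysis of the Perron vector near the hubs — with $m\ge1.8\times10^{9}$ used only to absorb lower-order terms and to fix all the bounded constants explicitly so that the count clears $\tfrac{m^{2}}{500}$; everything preceding it is routine.
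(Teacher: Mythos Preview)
Your opening reductions are correct and clean: the bound $8f(G)\ge\lambda^4-2M(G)+2m$ combined with Lemma~\ref{Lem:degreesum} disposing of $\lambda^2\ge1.42m$, and the Hofmeister bound $M(G)\le n\lambda^2$ disposing of $n\le0.49m$, both work as stated. The degree-based cover $T=\{u:d(u)>\sqrt m/9\}$ is a legitimate alternative to the paper's Perron-weight cover $A=\{u:x_u>\tfrac1{3m^{1/4}}\}$, and $|T|<18\sqrt m$ is right (your claim ``$x_v>\sqrt m/(9c)$'' should read $x_v>1/(9c)$, but you use the correct version afterward).

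The gap is the regime $n>0.49m$, and it is essentially the whole lemma. Your dichotomy there does not work as written. First, ``$G[T]$ dense on $<18\sqrt m$ vertices has $\Omega(m^2)$ quadrilaterals by supersaturation'' gives a constant far too small: with $e(G[T])=cm$ and $|T|<18\sqrt m$, convexity over pairs yields a $C_4$-count of order $c^4 m^2$ times a factor around $10^{-5}$, nowhere near $m^2/500$ unless $c$ is close to $1$. Second, ``$H$ nearly extremal for Lemma~\ref{lem5} hence close to $K_{a,b}$'' requires a stability statement you do not prove and the paper does not supply. Third, the near-star case (your ``$a=1$'') is precisely the heart of the lemma, and your sketch---``some hub must share $\Omega(m)$ neighbours with $p$''---is not a proof: you have not shown that an edge off $p$ forces a hub of \emph{constant} Perron weight with degree $\Omega(m)$, with constants good enough to clear $1/500$. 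You yourself flag this as ``the genuine obstacle''; it is not residual bookkeeping but the substance of the argument.

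The paper's proof is organised quite differently in this regime. It first proves (Claim~\ref{cl1}, the longest and most delicate step) that $\delta(G)\ge2$ unless $G\cong K_{1,m}$, via a three-level partition of $N(u^*)$ by Perron weight and a careful comparison of $\sum_{v\in N_{S\cup T}(u)}x_v$ against $e(S,T)x_{u^*}$. With $\delta\ge2$ in hand, the remaining claims force $|V(G)|\approx m/2$, then $\Delta(G)\le\tfrac2{15}m$ (via interlacing on an induced star inside the independent set $B$), then $|A^*|\le24$ for the neighbours of degree-$2$ vertices (this is where the hypothesis $x_ux_v>\tfrac1{9\sqrt m}$ is used decisively), and finally $M(G)<\tfrac49 m^2$, which with Lemma~\ref{lem3} gives $f(G)\ge m^2/72$ outright---no $C_4$-counting over pairs is needed at all. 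You never establish $\delta\ge2$, and without it the near-star configurations you correctly identify as the obstacle remain unhandled; the paper spends most of its effort precisely there.
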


\begin{proof}
We may assume that $\delta(G)\geq1$, where $\delta(G)$ is the minimum degree of $G$.
Then $G$ is connected (otherwise, we can find an edge $uv$ with $x_ux_v=0$).
By Perron-Frobenius theorem, $X$ is a positive vector.
Let $A=\{u\in V(G): x_u>\frac1{3\sqrt[4]{m}}\}$ and $B=V(G)\setminus A.$
Clearly, $B$ is an independent set.
Now suppose that $f(G)<\frac{m^2}{500}$
and set $\lambda:=\lambda(G)$.
We will prove a series of claims.

\begin{claim}\label{cl1}
We have $\delta(G)\geq2$ unless $G\cong K_{1,m}$.
\end{claim}

\begin{proof}
Assume that there exists a vertex $u\in V(G)$ with $d_G(u)=1$ and $N_G(u)=\{\bar{u}\}$.
Then $x_ux_{\bar{u}}=\frac{x_{\bar{u}}^2}\lambda\leq \frac{x_{\bar{u}}^2}{\sqrt{m}}.$
Since $x_ux_{\bar{u}}>\frac1{9\sqrt{m}}$, we have $x_{\bar{u}}>\frac13.$
Let $u^*\in V(G)$ with $x_{u^*}=\max_{v\in V(G)}x_v$.
Then $x_{u^*}>\frac13$.

Now let $S:=N_G(u^*)$, $T:=V(G)\setminus (S\cup\{u^*\})$, and
$N_S(v)=N_G(v)\cap S$ for a vertex $v\in V(G)$.
Moreover, we partite $S$ into three subsets $S_1$, $S_2$ and $S_3$,
where $S_1=\{v: \frac14<x_v\leq x_{u^*}\},$ $S_2=\{v: \frac16<x_v\leq \frac14\}$,
and $S_3=\{v: 0<x_v\leq \frac16\}$.

Choose a vertex $u\in S_1$ arbitrarily.
By Cauchy-Schwarz inequality,
\begin{eqnarray}\label{eq3}
(\lambda x_u)^2=\Big(\sum_{v\in N_G(u)}x_v\Big)^2\leq d_G(u)\sum_{v\in N_G(u)}x_v^2\leq d_G(u)(1-x_u^2).
\end{eqnarray}
Since $x_u>\frac14$ and $\lambda\geq\sqrt{m}$, we have $d_G(u)\geq \frac m{15}.$
If $|N_T(u)|\leq \frac{m}{450}$, then $|N_S(u)|\geq\frac m{15}-\frac{m}{450}-1\geq \frac{m}{15.6}$,
and thus $G$ contains a copy of $K_{2,\lceil\frac m{15.6}\rceil}$.
Hence, $G$ contains at least ${\lceil\frac m{15.6}\rceil\choose 2}$ ($\geq \frac{m^2}{500}$)
quadrilaterals, a contradiction.
Therefore, $|N_T(u)|\geq \frac{m}{450}$ and $|N_S(u)|<\frac{m}{15.6}$.
Now let $S^*=\{v\in S: x_v<\frac1 {108}\}$, $T^*=\{v\in T: x_v<\frac1 {108}\}$ and
$V'=(S\setminus S^*)\cup(T\setminus T^*)$.
Since $X$ is a unit vector, we have $|V'|\leq 108^2.$
By Cauchy-Schwarz inequality,
\begin{eqnarray}\label{eq0}
\sum_{v\in V'}x_v
\leq \sqrt{|V'|\sum_{v\in V'}x_v^2}\leq \sqrt{|V'|}\leq108.
\end{eqnarray}
Consequently,
$$\sum_{v\in N_S(u)}x_v=\sum_{v\in N_{S\setminus S^*}(u)}x_v+\sum_{v\in N_{S^*}(u)}x_v
\leq 108+\frac 1{108}|N_{S^*}(u)|.$$
Recall that $|N_{S^*}(u)|\leq |N_{S}(u)|\leq\frac{m}{15.6}$ and $x_{u^*}>\frac13$.
It follows that
\begin{eqnarray*}
\sum_{v\in N_S(u)}x_v\leq (324+\frac 1{36}|N_{S^*}(u)|)x_{u^*}
<\frac 1{36}\cdot\frac m{15}x_{u^*}.
\end{eqnarray*}
On the other hand, note that $|N_{T^*}(u)|\geq |N_T(u)|-108^2\geq\frac{m}{525}$
and $x_v<\frac1{108}<\frac1{36}x_{u^*}$ for any $v\in T^*$. Then
\begin{eqnarray*}
\sum_{v\in N_T(u)}x_v
&<&\sum_{v\in N_{T\setminus T^*}(u)}x_{u^*}+\sum_{v\in N_{T^*}(u)}\frac 1{36}x_{u^*}
\leq|N_T(u)|x_{u^*}-\frac {35}{36}\cdot\frac{m}{525}x_{u^*}\\
&=&|N_T(u)|x_{u^*}- \frac 1{36}\cdot\frac m{15}x_{u^*}.
\end{eqnarray*}
It follows that $\sum_{v\in N_{S\cup T}(u)}x_v<|N_T(u)|x_{u^*}.$
Let $e(S,T)$ be the number of edges from $S$ to $T$,
and $e(S)$ be the number of edges within $S$. Then
\begin{eqnarray}\label{eq4}
\sum_{u\in S_1}\sum_{v\in N_{S\cup T}(u)}x_v<e(S_1,T)x_{u^*}.
\end{eqnarray}

Secondly, consider a vertex $u\in S_2$ arbitrarily.
Note that $x_u>\frac16$ and $\lambda\geq\sqrt{m}$.
Then (\ref{eq3}) gives
$d_G(u)\geq \frac m{35}.$
Since $S^*\subseteq S_3$ and $x_{u^*}-x_u>\frac13-\frac14=\frac1{12}$,
we have $$\sum_{v\in N_{S^*}(u)}x_v\leq \frac 1{108}|N_{S_3}(u)|
\leq\frac19|N_{S_3}(u)|(x_{u^*}-x_u),$$
and by (\ref{eq0}) we have
$\sum_{v\in N_{S\setminus S^*}(u)}x_v\leq \sum_{v\in V'}x_v\leq108$. Then
\begin{eqnarray}\label{eq5}
\sum_{v\in N_S(u)}\!\!x_v=\sum_{v\in N_{S\setminus S^*}(u)}x_v+\!\!\!\!\sum_{v\in N_{S^*}(u)}\!\!x_v
\leq 108+\frac 19|N_{S_3}(u)|(x_{u^*}-x_u).
\end{eqnarray}
If $|N_{S^*}(u)|\geq \frac m{72}$,
then $|N_{S_3}(u)|\geq |N_{S^*}(u)|\geq\frac m{72}$.
Since $x_{u^*}-x_u>\frac1{12}$, it follows from (\ref{eq5}) that
$\sum_{v\in N_S(u)}x_v<|N_{S_3}(u)|(x_{u^*}-x_u)$, and thus
\begin{eqnarray}\label{eq6}
\sum_{v\in N_{S\cup T}(u)}x_v<|N_{S_3}(u)|(x_{u^*}-x_u)+|N_T(u)|x_{u^*}.
\end{eqnarray}
If $|N_{S^*}(u)|\leq\frac m{72}$, then 
$$|N_{T^*}(u)|\geq d_G(u)-|N_{S^*}(u)|-108^2>\frac m{72}.$$
Hence,
\begin{eqnarray*}
\sum_{v\in N_{T^*}(u)}x_v\leq|N_{T^*}(u)|\cdot\frac1{108}<
|N_{T^*}(u)|x_{u^*}-108,
\end{eqnarray*}
as $x_{u^*}>\frac13.$
It follows that $\sum_{v\in N_T(u)}x_v<|N_T(u)|x_{u^*}-108.$
Combining with (\ref{eq5}), we can also obtain (\ref{eq6}).
Therefore, in both cases we have
\begin{eqnarray}\label{eq7}
\sum_{u\in S_2}\sum_{v\in N_{S\cup T}(u)}x_v<e(S_2,S_3)x_{u^*}+e(S_2,T)x_{u^*}-\sum_{u\in S_2}|N_{S_3}(u)|x_u.
\end{eqnarray}

Thirdly, we consider an arbitrary vertex $u\in S_3$.
Since $x_{u^*}>\frac13$, we have $x_v\leq \frac16<\frac12x_{u^*}$ for each $v\in N_{S_3}(u)$.
Thus, $\sum_{u\in S_3}\sum_{v\in N_{S_3}(u)}x_v\leq e(S_3)x_{u^*}$,
with equality if and only if $e(S_3)=0$.
Therefore,
\begin{eqnarray}\label{eq8}
\sum_{u\in S_3}\sum_{v\in N_{S\cup T}(u)}x_v\leq e(S_3,S_1)x_{u^*}
+e(S_3)x_{u^*}+e(S_3,T)x_{u^*}+\sum_{u\in S_3}\sum_{v\in N_{S_2}(u)}x_v.
\end{eqnarray}

Notice that $$\sum_{u\in S_2}|N_{S_3}(u)|x_u=\sum_{u\in S_3}\sum_{v\in N_{S_2}(u)}x_v.$$
Combining with (\ref{eq4}), (\ref{eq7}) and (\ref{eq8}), we have
\begin{eqnarray}\label{eq9}
\sum_{u\in S}\sum_{v\in N_{S\cup T}(u)}x_v\leq (e(S)
+e(S,T))x_{u^*},
\end{eqnarray}
where if equality holds then $S_1\cup S_2=\varnothing$ and $e(S_3)=0$,
that is, $e(S)=0$.
Furthermore,
we can see that
\begin{eqnarray*}
\lambda^2x_{u^*}
=\sum_{u\in S}\sum_{v\in N_G(u)}\!\!\!x_v=|S|x_{u^*}+\sum_{u\in S}\sum_{v\in N_{S\cup T}(u)}\!\!\!x_v
\leq(|S|+e(S)+e(S,T))x_{u^*}\leq mx_{u^*}.
\end{eqnarray*}
Since $\lambda\geq \sqrt{m}$, the above inequality holds in equality, that is, $\lambda=\sqrt{m}$.
Therefore, $m=|S|+e(S)+e(S,T)$, and (\ref{eq9}) holds in equality (hence $e(S)=0$).
This implies that $G$ is a bipartite graph.
By Lemma \ref{lem5}, $G$ is a complete bipartite graph.
Since $f(G)<\frac{m^2}{500}$, $G$ can only be a star.
This completes the proof.
\end{proof}

In the following, we may assume that $G\ncong K_{1,m}$.
Then by Claim \ref{cl1}, $\delta(G)\geq2$.

\begin{claim}\label{cl2}
$|A|\leq9\sqrt{m}$.
\end{claim}

\begin{proof}
Recall that $x_u>\frac1{3\sqrt[4]{m}}$ for each $u\in A$.
Thus $\sum_{u\in A}x_{u}^2>\frac{|A|}{9\sqrt{m}}$, and hence
$|A|\leq 9\sqrt{m}\sum_{u\in A}x_{u}^2\leq9\sqrt{m}.$
\end{proof}

\begin{claim}\label{cl3}
Let $|G|=\frac m2+b$. Then $-\frac m{125}\leq b\leq|A|$.
\end{claim}

\begin{proof}
Set $\lambda':=\lambda_{|G|}$. Note that $\lambda\geq \sqrt{m}$.
By Lemmas \ref{lem2} and \ref{lem3},
\begin{eqnarray}\label{eq10}
f(G)\geq\frac18(\lambda^4+\lambda'^4)-\frac14M(G)
\geq\frac18(\lambda^4+\lambda'^4)-\frac{|G|}4\lambda^2\geq\frac18\lambda'^4-\frac b4\lambda^2.
\end{eqnarray}
If $b<-\frac m{125}$, then $$f(G)\geq-\frac b4\lambda^2\geq\frac{m}{500}\lambda^2\geq\frac{m^2}{500},$$
a contradiction.
Thus, $b\geq-\frac m{125}$.

On the other hand, recall that $e(B)=0$ and $\delta(G)\geq2$,
then
$$m\geq e(B,A)\geq 2|B|=2(|G|-|A|)=2(\frac m2+b-|A|).$$
Thus, $b\leq |A|$, as desired.
\end{proof}

\begin{claim}\label{cl4}
$\Delta(G)\leq \frac2{15}m,$ where $\Delta(G)$ is the maximum degree of $G$.
\end{claim}

\begin{proof}
We know that $\sum_{i=1}^{|G|}\lambda_i^2=2m$. Thus, $\lambda^2=\lambda_1^2\leq 2m$.
Combining with (\ref{eq10}) and $b\leq|A|\leq9\sqrt{m}$, we have
\begin{eqnarray}\label{eq11}
f(G)\geq\frac18\lambda'^4-\frac b4\lambda^2\geq\frac18\lambda'^4-9m^{\frac32}.
\end{eqnarray}

Now if there exists some $u\in V(G)$ with $d_G(u)>\frac2{15}m$, then
$$|N_B(u)|\geq d_G(u)-|A|>\frac2{15}m-9\sqrt{m}.$$
Since $e(B)=0$, $G$ contains $K_{1,|N_B(u)|}$ as an induced subgraph.
By Lemma \ref{Lem:CauchyInter},
$$\lambda'\leq -\sqrt{|N_B(u)|}<-\sqrt{\frac2{15}m-9\sqrt{m}},$$
and by (\ref{eq11}) we have
\begin{eqnarray*}
f(G)\geq\frac18\lambda'^4-9m^{\frac32}\geq \frac18\Big(\frac2{15}m-9\sqrt{m}\Big)^2-9m^{\frac32}>\frac{m^2}{500},
\end{eqnarray*}
for $m\geq 1.8\times10^9$.
We have a contradiction.
Therefore, $\Delta(G)\leq \frac2{15}m.$
\end{proof}

\begin{claim}\label{cl5}
Let $B^*=\{u\in V(G): d_G(u)=2\}$. Then $B^*\subseteq B$ and
\begin{eqnarray}\label{eq12}
\frac m2+3(b-|A|)\leq |B^*|\leq \frac m2.
\end{eqnarray}
\end{claim}

\begin{proof}
Let $u\in B^*$ and $N_G(u)=\{u_1,u_2\}$. Then $\lambda x_{u}=x_{u_1}+x_{u_2}\leq 2$.
Since $\lambda\geq \sqrt{m}$, we have $x_u\leq\frac{2}{\sqrt{m}}<\frac 1{3\sqrt[4]{m}},$
and so $u\in B$.

Recall that $e(B)=0$. Thus, $e(B^*)=0$, and $m\geq e(B^*,A)\geq 2|B^*|$.
This gives $|B^*|\leq \frac m2.$
On the other hand, note that $|B|=|G|-|A|=\frac m2+b-|A|$,
then
\begin{eqnarray*}
m\geq e(B,A)\geq2|B^*|+3(|B|-|B^*|)=\frac32m+3(b-|A|)-|B^*|.
\end{eqnarray*}
It follows that $|B^*|\geq\frac m2+3(b-|A|).$
\end{proof}

\begin{claim}\label{cl6}
Let $A^*=\{v\in N_G(u): u\in B^*\}$. Then $A^*\subseteq A$ and $|A^*|\leq24$.
\end{claim}

\begin{proof}
Since $e(B)=0$, we have $N_G(u)\subseteq A$ for any $u\in B^*$.
Thus, $A^*\subseteq A$.
Furthermore, we will see that $\frac1{25}<x_v^2\leq\frac{2}{17}$ for each $v\in A^*$.

Let $v$ be an arbitrary vertex in $A^*$.
By Cauchy-Schwarz inequality,
\begin{eqnarray*}
(\lambda x_v)^2=\Big(\sum_{u\in N_G(v)}x_u\Big)^2
\leq d_G(v)\sum_{u\in N_G(v)}x_u^2\leq d_G(v)(1-x_v^2)
\leq \frac2{15}m(1-x_v^2),
\end{eqnarray*}
as $\Delta(G)\leq \frac 2{15}m$.
Since $\lambda\geq\sqrt{m}$, we have $x_v^2\leq \frac2{17}.$

If there exists a vertex $v\in A^*$ with $x_v^2\leq\frac1{25}$,
then by the definition of $A^*$, we can find a vertex $u\in N_{B^*}(v)$.
Clearly, $$\lambda x_u\leq x_v+\sqrt{\frac2{17}}\leq\frac15+\sqrt{\frac2{17}}<\frac59.$$
Consequently,
$$x_ux_v<\frac 1\lambda\cdot\frac 59\cdot\frac15\leq \frac1{9\sqrt{m}},$$
which contradicts the condition of Lemma \ref{lem7}.
Therefore, $x_v^2>\frac1{25}$ for any $v\in A^*$, and so $|A^*|\leq24.$
\end{proof}

\begin{claim}\label{cl7}
Let $V'':=(A\setminus A^*)\cup (B\setminus B^*)$.
Then $|V''|\leq\frac m{60}$ and $e(V'')\leq \frac m{20}.$
\end{claim}

\begin{proof}
Recall that $|A\cup B|=|G|=\frac m2+b$.
Combining with (\ref{eq12}), we obtain that
$|V''|\leq|G|-|B^*|\leq 3|A|-2b.$
Moreover, by Claims \ref{cl2} and \ref{cl3}, we have
$|A|\leq9\sqrt{m}$ and $b\geq-\frac m{125}$. Thus,
$|V''|\leq 27\sqrt{m}+\frac 2{125}m\leq\frac m{60}.$

Now we estimate $e(V'')$.
Again by $|A|\leq9\sqrt{m}$, $b\geq-\frac m{125}$ and (\ref{eq12}), we have
\begin{eqnarray*}
e(A^*,B^*)=2|B^*|\geq m+6(b-|A|)\geq m-\frac6{125}m-54\sqrt{m}.
\end{eqnarray*}
It follows that
$e(V'')\leq m-e(A^*,B^*)\leq\frac6{125}m+54\sqrt{m}\leq \frac m{20}.$
\end{proof}

Now we give the final proof of Lemma \ref{lem7}.
For convenience, let $d'(u):=|N_{V''}(u)|$ for each $u\in V''$.
Note that $e(V'',B^*)=0$. Thus by Claim \ref{cl6}, $$d_G(u)\leq d'(u)+|A^*|\leq d'(u)+24$$
for each vertex $u\in V''$.
Consequently,
\begin{eqnarray}\label{eq13}
\sum_{u\in V''}d^2_G(u)\leq \sum_{u\in V''}(d'(u)+24)^2=96e(V'')+24^2|V''|+\sum_{u\in V''}d'^2(u).
\end{eqnarray}
Since $e(V'')\leq \frac m{20}$, by Lemma \ref{Lem:degreesum} we have
$\sum_{u\in V''}d'^2(u)\leq \frac{m^2}{400}+\frac m{20}.$
Combining this with Claim \ref{cl7} and (\ref{eq13}), we have
\begin{eqnarray}\label{eq14}
\sum_{u\in V''}d^2_G(u)
\leq 96\cdot\frac m{20}+24^2\cdot\frac m{60}+\frac{m^2}{400}+\frac m{20}<\frac {m^2}{225}.
\end{eqnarray}
On the other hand, by Claim \ref{cl4}, $\Delta(G)\leq \frac 2{15}m$, and so
$$\sum_{u\in A^*}d^2_G(u)\leq|A^*|(\Delta(G))^2\leq \frac {96}{225}m^2$$ (as $|A^*|\leq24$).
Moreover, by Claim \ref{cl5} $|B^*|\leq \frac m2$, and thus
$$\sum_{u\in B^*}d^2_G(u)=4|B^*|\leq 2m.$$
Combining with (\ref{eq14}), we get
\begin{eqnarray*}
M(G)=\sum_{u\in V''\cup A^*\cup B^*}d^2_G(u)\leq \frac{1}{225}m^2+\frac{96}{225}m^2+2m<\frac{100}{225}m^2=\frac49m^2.
\end{eqnarray*}
Now by Lemma \ref{lem3}, we have
\begin{eqnarray*}
f(G)\geq \frac18\lambda^4-\frac14M(G)\geq \frac18m^2-\frac1{9}m^2=\frac1{72}m^2>\frac1{500}m^2,
\end{eqnarray*}
a contradiction.
This completes the proof.
\end{proof}

\subsection{Nikiforov's deleting small eigenvalue edge method}
Over the past decades, Nikiforov developed some novel tools and techniques for
solving problems in spectral graph theory (see \cite{N11}). One is the method we called ``deleting
small eigenvalue edge method", or ``The DSEE Method". Generally speaking, an edge $xy\in E(G)$
is called a \emph{small eigenvalue edge}, if $x_ux_v$ is small where $x_u,x_v$ are
Perron components.

By using this method, Nikiforov \cite{N09} successfully proved the following results, of which
some original ideas appeared in \cite{N11} earlier:
\begin{itemize}
\item Every graph on $m$ edges contains a 4-cycle if $\lambda(G)\geq\sqrt{m}$ and $m\geq10$,
unless it is a star with possibly some isolated vertices (see Claim 4 in \cite[pp.~2903]{N09});

\item Every graph on $m$ edges satisfies that the booksize $bk(G)>\frac{\sqrt[4]{m}}{12}$ if $\lambda(G)\geq\sqrt{m}$,
unless it is a complete bipartite graph with possibly some isolated vertices
(see \cite{N21}, this confirmed a conjecture in \cite{ZLS21}).
\end{itemize}

One main ingredient in the proof of Theorem \ref{Thm:Mainresult} is using this method.
\subsection{Proof of Theorem \ref{Thm:Mainresult}}
Now we are ready to give the proof of Theorem \ref{Thm:Mainresult}.

\vspace{2mm}
\noindent
{\bf Proof of Theorem \ref{Thm:Mainresult}.}
Let $G$ be a graph with $e(G)=m$ and $\lambda(G)\geq\sqrt{m}.$
By using the Nikiforov DESS Method \cite{N21},
we first construct a sequence of graphs.

\vspace{2mm}
\noindent
(i) Set $i:=0$ and $G_0:=G$.\\
(ii) If $i=\lfloor\frac m2\rfloor,$ stop.\\
(iii) Let $X=(x_1,x_2\ldots,x_{|G_i|})^T$ be the Perron vector of $G_i$.\\
(iv) If there exists $uv\in E(G_i)$ with $x_ux_v\leq\frac1{9\sqrt{e(G_i)}}$,
set $G_{i+1}:=G_i-uv$ and $i:=i+1$.\\
(v) If there is no such edge, stop.

\vspace{2mm}

Assume that $G_k$ is the resulting graph of the graph sequence
constructed by the above algorithm.
Then $k\leq\lfloor\frac m2\rfloor.$
We can obtain the following two claims.

\begin{claim}\label{cl8}
$\lambda(G_{i+1})\geq\sqrt{m-i-1}$
for each $i\in \{0,1,\ldots,k-1\}$.
\end{claim}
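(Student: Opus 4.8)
The plan is to track the spectral radius through a single step of the deletion algorithm and then iterate. Fix $i\in\{0,1,\dots,k-1\}$ and abbreviate $H:=G_i$, $H':=G_{i+1}=H-uv$, where $uv\in E(H)$ is the deleted edge, so that $x_ux_v\le \tfrac1{9\sqrt{e(H)}}$ with $X$ the (unit) Perron vector of $H$. First I would use the Rayleigh quotient: since $X$ is a unit vector supported on $V(H')=V(H)$,
\begin{eqnarray*}
\lambda(H')\ \ge\ X^{\mathsf T}A(H')X\ =\ X^{\mathsf T}A(H)X-2x_ux_v\ =\ \lambda(H)-2x_ux_v\ \ge\ \lambda(H)-\frac{2}{9\sqrt{e(H)}}.
\end{eqnarray*}
Next I would feed in what is known along the sequence: by construction $e(G_j)=m-j$ for every $j$ in range, and an easy induction (the base case $\lambda(G_0)=\lambda(G)\ge\sqrt m$, plus the statement being proved for smaller indices) gives $\lambda(G_i)\ge\sqrt{m-i}$. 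Hence with $e(H)=m-i$,
\begin{eqnarray*}
\lambda(G_{i+1})\ \ge\ \sqrt{m-i}-\frac{2}{9\sqrt{m-i}}.
\end{eqnarray*}

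It then remains to verify the elementary numerical inequality
\begin{eqnarray*}
\sqrt{m-i}-\frac{2}{9\sqrt{m-i}}\ \ge\ \sqrt{m-i-1}.
\end{eqnarray*}
Writing $t:=m-i\ge m-k\ge m-\lfloor m/2\rfloor\ge m/2$, this is equivalent (after multiplying by $\sqrt t$ and rearranging) to $t-\tfrac29\ge\sqrt{t(t-1)}$, and since both sides are nonnegative for $t\ge1$, squaring reduces it to $t^2-\tfrac49 t+\tfrac{4}{81}\ge t^2-t$, i.e. $\tfrac59 t+\tfrac{4}{81}\ge 0$, which holds trivially. (One only needs $t\ge \tfrac29$ to justify squaring, and $t\ge m/2$ is far more than enough for $m\ge 3.6\times10^9$.) This closes the induction and yields $\lambda(G_{i+1})\ge\sqrt{m-i-1}$ for all $i\in\{0,1,\dots,k-1\}$.

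I expect no serious obstacle here: the only subtlety is bookkeeping, namely making sure the induction hypothesis $\lambda(G_i)\ge\sqrt{m-i}=\sqrt{e(G_i)}$ is available so that the deletion rule in step (iv) can legitimately be applied with threshold $\tfrac1{9\sqrt{e(G_i)}}$, and that $t=m-i$ stays bounded below (by $m/2$) throughout, which holds since $k\le\lfloor m/2\rfloor$. Everything else is the one-line Rayleigh-quotient bound plus a trivial squaring estimate.
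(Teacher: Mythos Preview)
Your proof is correct. Both your argument and the paper's start from the same one-step Rayleigh-quotient bound $\lambda(G_{i+1})\ge\lambda(G_i)-\tfrac{2}{9\sqrt{e(G_i)}}$; the only difference is in how the conclusion is extracted. You induct on $i$, feeding the hypothesis $\lambda(G_i)\ge\sqrt{m-i}$ into the one-step bound and checking the elementary inequality $\sqrt t-\tfrac{2}{9\sqrt t}\ge\sqrt{t-1}$. The paper instead telescopes all the way back to $\lambda(G_0)\ge\sqrt m$, crudely bounds each summand by the largest one, and obtains the explicit estimate
\[
\lambda(G_{i+1})\ \ge\ \sqrt m-\frac{2(i+1)}{9\sqrt{m-i-1}},
\]
from which the claim follows using $i+1\le\lfloor m/2\rfloor$. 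Your route is marginally cleaner for the claim itself; the paper's telescoped inequality, however, is labeled and reused later (in the case $k=\lfloor m/2\rfloor$ of the proof of Theorem~\ref{Thm:Mainresult}), so if you adopt your version you would need to record that bound separately.
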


\begin{proof}
Let $X$ be the Perron vector of $G_i$ with component $x_u$ corresponding to $u\in V(G_i)$.
Then, there exists $uv\in E(G_i)$ with $x_ux_v\leq\frac1{9\sqrt{e(G_i)}}$.
Thus,
$$\lambda(G_{i+1})\geq X^TA(G_{i+1})X=X^TA(G_{i})X-2x_ux_v\geq\lambda(G_i)-\frac2{9\sqrt{e(G_i)}}.$$
Hence,
$$\lambda(G_0)\leq \lambda(G_1)+\frac2{9\sqrt{m}}\leq\cdots
\leq \lambda(G_{i+1})+\sum_{j=0}^{i}\frac2{9\sqrt{m-j}}.$$
It follows that
\begin{eqnarray}\label{eq15}
\lambda(G_{i+1})\geq\lambda(G_0)-\frac{2(i+1)}{9\sqrt{m-i-1}}\geq\sqrt{m}-\frac{2(i+1)}{9\sqrt{m-i-1}}.
\end{eqnarray}
This implies that $\lambda(G_{i+1})\geq\sqrt{m-i-1}$, as $i+1\leq k\leq\lfloor\frac m2\rfloor$.
\end{proof}

Now we may assume that all isolated vertices are removed from each $G_i$,
where $i\in \{0,1,\ldots,k\}$.

\begin{claim}\label{cl9}
$G_k$ cannot be a star unless $G_k=G_0\cong K_{1,m}$.
\end{claim}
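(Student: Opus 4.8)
The plan is to argue by contradiction: suppose $G_k$ is a star $K_{1,t}$ with $t = e(G_k)$ edges, but $G_0 \not\cong K_{1,m}$. Since each $G_{i+1}$ is obtained from $G_i$ by deleting a single edge, the graph $G_0$ is built from $G_k$ by adding back $k$ edges one at a time. The key structural observation is that a star is a very rigid graph: every edge of $K_{1,t}$ is incident to the center, and the center has Perron weight far larger than any leaf. So I would track what happens in the final step of the algorithm, i.e.\ the passage from $G_{k-1}$ to $G_k = K_{1,t}$, where we deleted a ``small eigenvalue edge'' $uv$ with $x_u x_v \leq \frac{1}{9\sqrt{e(G_{k-1})}}$ in the Perron vector of $G_{k-1}$.

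First I would handle the trivial case $k=0$ separately: then $G_k = G_0$ is already a star, and since no edge could be deleted, in particular $G_0$ contains no $4$-cycle; by Theorem~\ref{Thm:N09} (applicable since $m \geq 3.6\times 10^9 \geq 10$) together with $\lambda(G_0) \geq \sqrt m$, the graph $G_0$ must be a star, and then the edge count forces $G_0 \cong K_{1,m}$, which is the exempted case. So assume $k \geq 1$. Now $G_{k-1} = G_k + uv$ for some edge $uv$, and $G_{k-1}$ is a star plus one extra edge. If the extra edge $uv$ is incident to the center $w$ of the star, then $G_{k-1}$ is again a star $K_{1,t+1}$ (we just added another leaf), and $\lambda(G_{k-1}) \geq \sqrt{e(G_{k-1})}$ by Claim~\ref{cl8}; iterating this reasoning back along the whole sequence, each $G_i$ would be a star, so $G_0 \cong K_{1,m}$, contradiction. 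Hence the added edge $uv$ joins two leaves of $K_{1,t}$, so $G_{k-1}$ is a ``triangle with pendant leaves at one vertex'' — that is, $K_{1,t}$ with one edge between two leaves, which contains a triangle but still no $4$-cycle.

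The crux is then to show this cannot happen, by examining the Perron weights. In $G_{k-1}$, let $w$ be the center (degree $t-1$ among the star edges plus possibly one of $u,v$), and note $u,v$ each have degree $2$: adjacent to $w$ and to each other. Then $\lambda x_u = x_w + x_v$ and $\lambda x_v = x_w + x_u$, so $x_u = x_v$ and $\lambda x_u = x_w + x_u$, giving $x_u = \frac{x_w}{\lambda - 1}$. On the other hand every genuine leaf $\ell$ of the star has $\lambda x_\ell = x_w$, so $x_u = \frac{\lambda}{\lambda-1} x_\ell > x_\ell$; moreover $x_w = \max_v x_v$. Now I estimate $x_u x_v = x_u^2 = \frac{x_w^2}{(\lambda-1)^2}$. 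Since $X$ is a unit vector and $G_{k-1}$ has a high-degree center, a Rayleigh/eigenvalue-equation bound on $x_w$ (e.g.\ $\lambda^2 x_w = \sum_{z \sim w}\sum_{y\sim z} x_y$, or simply $x_w^2 \geq \frac{\lambda^2}{\lambda^2 + \text{(number of leaves)}}$-type reasoning using $\lambda \geq \sqrt{e(G_{k-1})} = \sqrt{t+1}$ and $t+1 = e(G_{k-1})$) shows $x_w$ is bounded below by an absolute constant, so $x_u x_v = \frac{x_w^2}{(\lambda-1)^2}$ is comparable to $\frac{1}{\lambda^2} \asymp \frac{1}{e(G_{k-1})}$, and in fact one checks $x_u x_v > \frac{1}{9\sqrt{e(G_{k-1})}}$ for $m$ (hence $e(G_{k-1})$) large — contradicting that $uv$ was selected as a small eigenvalue edge in step (iv). This contradiction shows the added edge must be incident to the center after all, so $G_k$ a star forces $G_0 \cong K_{1,m}$, proving the claim.

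The main obstacle I anticipate is the quantitative lower bound on $x_w$: I need to rule out the possibility that the center weight is so small that $x_u x_v$ dips below the threshold $\frac{1}{9\sqrt{e(G_{k-1})}}$. The cleanest route is probably to use $\lambda(G_{k-1}) \geq \sqrt{e(G_{k-1})}$ from Claim~\ref{cl8} together with the fact that in $K_{1,t}$-plus-an-edge the spectral radius is only slightly above $\sqrt{t-1}$, pin down $\lambda$ to within $O(1)$ of $\sqrt{e(G_{k-1})}$, and then read off $x_w$ from the characteristic equation; but care is needed because $e(G_{k-1})$ could a priori be much smaller than $m$ if many edges were deleted, so I should double-check that $k \leq \lfloor m/2 \rfloor$ keeps $e(G_{k-1}) = m - (k-1)$ large enough (at least $\lceil m/2 \rceil$) for the large-$m$ inequalities to go through with the stated bound $m \geq 3.6 \times 10^9$.
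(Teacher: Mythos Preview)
Your argument has a genuine error in the leaf--leaf case and also misses two sub-cases.

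\textbf{The leaf--leaf case fails.} You correctly compute $x_u=x_v=\dfrac{x_w}{\lambda-1}$ and observe $x_u x_v=\dfrac{x_w^{2}}{(\lambda-1)^{2}}\asymp \dfrac{1}{e(G_{k-1})}$. But the inequality you then assert goes the wrong way: for $e:=e(G_{k-1})$ large one has $\dfrac{1}{e}<\dfrac{1}{9\sqrt{e}}$ (indeed as soon as $e>81$), so $x_u x_v$ is \emph{below} the threshold, not above it. Thus $uv$ is a legitimate small-eigenvalue edge and the algorithm is perfectly entitled to delete it; there is no contradiction along this line. The paper instead looks at the eigenvalue equation at the \emph{center} $w$: since $x_u=x_v<\tfrac12 x_w$ one gets
\[
\lambda^{2}x_w=\sum_{z\in N(w)}\lambda x_z=(m-k)x_w+(x_u+x_v)<(m-k+1)x_w=e(G_{k-1})\,x_w,
\]
so $\lambda(G_{k-1})<\sqrt{e(G_{k-1})}$, and it is Claim~\ref{cl8} that is violated, not the small-edge condition.

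\textbf{Missing cases.} After ruling out ``$uv$ incident to the center'' you jump to ``$uv$ joins two leaves'', but since isolated vertices are discarded along the sequence, $uv$ may involve a vertex outside $V(G_k)$: it could be a pendant edge attached to a leaf, or an isolated edge disjoint from the star. In either case $G_{k-1}$ is bipartite but not complete bipartite, and the paper dispatches these via Lemma~\ref{lem5}, which gives $\lambda(G_{k-1})<\sqrt{e(G_{k-1})}$, again contradicting Claim~\ref{cl8}. Your proof omits these cases entirely.

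Finally, in the center-incident case the paper does not iterate back to $G_0$; it simply notes that in the star $K_{1,m-k+1}$ one has $x_{u_0}^{2}=\tfrac12$, so every edge satisfies $x_{u_0}x_{u_i}=\dfrac{x_{u_0}^{2}}{\lambda}=\dfrac{1}{2\sqrt{e(G_{k-1})}}>\dfrac{1}{9\sqrt{e(G_{k-1})}}$, and hence step~(iv) could not have selected any edge --- an immediate contradiction. Your inductive workaround would be salvageable only once the other cases are correctly handled at every stage.
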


\begin{proof}
Suppose to the contrary that $k\geq1$ while $G_k$ is a star.
Since $e(G_k)=m-k$, we have $G_k\cong K_{1,m-k}$.
Let $u_0$ be the central vertex of $G_k$ and $u_1,\ldots,u_{m-k}$ be the leaves.
We now let $G_k=G_{k-1}-uv$ and $X$ be the Perron vector of $G_{k-1}$.

If $uv$ is a pendent edge incident to $u_0$, say $uv=u_0u_{m-k+1}$,
then $$\lambda(G_{k-1})=\sqrt{e(G_{k-1})}=\sqrt{m-k+1}$$ and
$\lambda(G_{k-1})x_{u_i}=x_{u_0}$ for $i\in\{1,2,\ldots,m-k+1\}$.
Hence, $\|X\|_2=\sum_{i=0}^{m-k+1}x_{u_i}^2=2x_{u_0}^2$, which gives
$x_{u_0}^2=\frac12$.
It follows that $$x_{u_0}x_{u_{m-k+1}}=\frac{x^2_{u_0}}{\sqrt{e(G_{k-1})}}>\frac{1}{9\sqrt{e(G_{k-1})}},$$
which contradicts the definition of $G_k$.

If $uv$ is an isolated edge or a pendent edge not incident to $u_0$,
then $G_{k-1}$ is bipartite but not complete bipartite.
By Lemma \ref{lem5}, $\lambda(G_{k-1})<\sqrt{e(G_{k-1})}$,
which contradicts Claim \ref{cl8}.

Now we conclude that $uv$ is an edge within $V(G_k)\setminus\{u_0\}$,
say $uv=u_1u_2$,
then $x_{u_1}=x_{u_2}$ and  $\lambda(G_{k-1})x_{u_1}=x_{u_0}+x_{u_2}$.
Hence, $x_{u_1}=\frac{x_{u_0}}{\lambda(G_{k-1})-1}<\frac12x_{u_0},$
as $\lambda(G_{k-1})\geq\sqrt{m-k+1}$ by Claim \ref{cl8}.
Consequently,
$$\lambda^2(G_{k-1})x_{u_0}=\sum_{i=1}^{m-k}\lambda(G_{k-1})x_{u_i}
=(m-k)x_{u_0}+(x_{u_1}+x_{u_2})<(m-k+1)x_{u_0}.$$
It follows that $\lambda(G_{k-1})<\sqrt{m-k+1}$,
which also contradicts Claim \ref{cl8}.
\end{proof}

Now we finish the final proof of Theorem \ref{Thm:Mainresult}.
Assume that $G$ is not a star. Then $G_k$ is not a star by Claim \ref{cl9};
moreover, $\lambda(G_k)\geq\sqrt{m-k}=\sqrt{e(G_{k})}$ by Claim \ref{cl8}.
If $k<\lfloor\frac m2\rfloor$, then
$x_ux_v>\frac1{9\sqrt{e(G_k)}}$ for any edge $uv\in E(G_k)$.
Since $e(G_k)=m-k>\frac m2,$
by Lemma \ref{lem7} $f(G_k)\geq \frac {((e(G_k))^2}{500}>\frac {m^2}{2000},$
and so $f(G)>\frac {m^2}{2000},$
as desired.

If $k=\lfloor\frac m2\rfloor$,
then by (\ref{eq15}) we have
\begin{eqnarray*}
\lambda(G_k)\geq\sqrt{m}-\frac{2k}{9\sqrt{m-k}}\geq \sqrt{m}-\frac{m}{9\sqrt{\frac m2}}=\Big(1-\frac{\sqrt{2}}9\Big)\sqrt{m},
\end{eqnarray*}
and so $$\lambda^4(G_k)\geq(1-\frac{\sqrt{2}}9)^4m^2=0.5047m^2>0.504m^2+4m.$$
On the other hand, by Lemma \ref{Lem:degreesum},
$$M(G_k)\leq (e(G_k))^2+e(G_k)=\lceil\frac m2\rceil^2+\lceil\frac m2\rceil\leq 0.25m^2+2m.$$
Thus by Lemma \ref{lem3},
\begin{eqnarray*}
f(G_k)\geq \frac18\lambda^4(G_k)-\frac14M(G_k)>\frac18(0.504-0.5)m^2=\frac1{2000}m^2,
\end{eqnarray*}
and so $f(G)>\frac {m^2}{2000}.$
This completes the proof. $\hfill\blacksquare$

\section{Concluding remarks}
We do not try our best to optimize the constant ``$\frac{1}{2000}$"
in Theorem \ref{Thm:Mainresult}. So it is natural to pose the following problem:
\begin{prob}
Determine $\lim\limits_{m\rightarrow \infty} \frac{f(m)}{m^2}$.
(We think that the upper bound in Proposition \ref{Prop} is close to the truth.)
\end{prob}

By Theorem \ref{Thm:Mainresult} and an inequality $\lambda(G)\geq \frac{2m}n$ due to Collatz and Sinogowitz \cite{CS57},
we deduce the following.
\begin{thm}
Let $G$ be a graph on $n$ vertices and $m$ edges. If
$m>\max\{\frac{n^2}{4},3.6\times 10^9\}$,
then $G$ contains $\frac{n^4}{32000}$
copies of 4-cycles.
\end{thm}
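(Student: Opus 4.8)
The plan is to obtain this statement as a quick corollary of Theorem~\ref{Thm:Mainresult}; essentially all the work is to convert the vertex-count hypothesis $m>\tfrac{n^2}{4}$ into the spectral hypothesis $\lambda(G)>\sqrt m$ that Theorem~\ref{Thm:Mainresult} demands, and then to verify that the star exception cannot occur.

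First I would invoke the Collatz--Sinogowitz bound $\lambda(G)\ge \tfrac{2m}{n}$, which holds for every graph because $\lambda(G)$ dominates the average degree. The point is that $\tfrac{2m}{n}>\sqrt m$ is equivalent to $4m^2>mn^2$, i.e.\ to $m>\tfrac{n^2}{4}$; so the hypothesis gives precisely $\lambda(G)\ge\tfrac{2m}{n}>\sqrt m$. Together with $m>3.6\times 10^{9}$, both hypotheses of Theorem~\ref{Thm:Mainresult} are then satisfied.

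Next I would rule out the exceptional case: a star (with possibly some isolated vertices) on $n$ vertices has at most $n-1$ edges, but $m>\tfrac{n^2}{4}>n-1$ once $n\ge 4$, and here $n$ is certainly large since $m>3.6\times10^{9}$. Hence $G$ is not a star, so Theorem~\ref{Thm:Mainresult} gives $f(G)\ge \tfrac{m^2}{2000}$. Finally, substituting $m^2>\tfrac{n^4}{16}$ yields $f(G)>\tfrac{n^4}{16\cdot 2000}=\tfrac{n^4}{32000}$, as claimed.

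There is no real obstacle here beyond careful bookkeeping: the only thing to watch is that the inequality $m>\tfrac{n^2}{4}$ is strict, which is exactly what is needed to force the strict inequality $\lambda(G)>\sqrt m$ (so that Theorem~\ref{Thm:Mainresult} applies and the degenerate complete-bipartite/star configurations are excluded) and, combined with $m>3.6\times10^9$, to push $\tfrac{m^2}{2000}$ past $\tfrac{n^4}{32000}$.
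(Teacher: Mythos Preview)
Your proposal is correct and follows exactly the paper's own derivation: the paper states this theorem as an immediate consequence of Theorem~\ref{Thm:Mainresult} together with the Collatz--Sinogowitz inequality $\lambda(G)\ge \tfrac{2m}{n}$, and you have filled in precisely those details (including the easy exclusion of the star case and the final substitution $m^2>\tfrac{n^4}{16}$).
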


On the other hand, we would like to mention the following conjecture.
\begin{conj}[Conjecture 5.1 in \!\!\cite{ZLS21}]
Let $k\geq 2$ be a fixed positive integer and $G$ be a graph of sufficiently large size $m$ without
isolated vertices. If $\lambda(G)\geq \frac{k-1+\sqrt{4m-k^2+1}}{2}$, then
$G$ contains a cycle of length $t$ for every $t\leq 2k+2$,
unless $G=S_{\frac{m}{k}+\frac{k+1}{2},k}$.
\end{conj}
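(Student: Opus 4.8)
The plan is to prove the conjecture by induction on $k$, reducing the inductive step to a sharp size-version of Kopylov's circumference theorem for the spectral radius. Write $\theta_k(m):=\frac{k-1+\sqrt{4m-k^2+1}}{2}$ and $S_{n,k}:=K_k\vee\overline{K_{n-k}}$. A one-line computation shows that $\lambda(G)\ge\theta_k(m)$ is equivalent to $\lambda(G)^2-(k-1)\lambda(G)+\binom k2\ge m$, and that $\lambda(S_{n,k})=\theta_k(m)$ exactly when $n=\frac mk+\frac{k+1}{2}$, so $S_{n,k}$ is the natural equality graph. I would first make the soft reductions: if $G$ is bipartite then $\lambda(G)\le\sqrt m<\theta_k(m)$ for $m$ large (here $k\ge2$ is used) by Lemma~\ref{lem5}, a contradiction, so $G$ is non-bipartite; since $\theta_k(m)>\sqrt m$, Nosal's theorem ($\lambda\le\sqrt{e(G)}$ for triangle-free $G$) forces a triangle, so $G$ has girth $3$; and $G$ is not a star (a star has $\lambda=\sqrt m$).

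The cycle lengths $3\le t\le 2k$ then come for free. For $k=2$ these are $C_3$ (Nosal, via $\lambda>\sqrt m$) and $C_4$ (Theorem~\ref{Thm:N09}, via $\lambda>\sqrt m$ together with ``$G$ is not a star''). For $k\ge3$, one checks $\theta_k(m)>\theta_{k-1}(m)$ for $m$ large (the difference is $\tfrac12-O(k/\sqrt m)>0$); since $\lambda(S_{\cdot,k-1})=\theta_{k-1}(m)$ at size $m$, the chain $\lambda(G)\ge\theta_k(m)>\theta_{k-1}(m)$ shows both that $G$ meets the hypothesis of the $(k-1)$-case and that $G\ne S_{\cdot,k-1}$, so the induction hypothesis yields $C_t\subseteq G$ for all $t\le 2(k-1)+2=2k$. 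Hence the inductive step reduces to producing $C_{2k+1}$ and $C_{2k+2}$ in $G$, with the single exception $G=S_{n,k}$.

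The core is the lemma: \emph{if $e(G)=m$ is large, $G$ has no isolated vertices, and $G$ contains no $C_{2k+1}$ or no $C_{2k+2}$, then $\lambda(G)\le\theta_k(m)$, with equality only for $G=S_{n,k}$}; fed into $\lambda(G)\ge\theta_k(m)$ this forces $G=S_{n,k}$. I would prove it in three steps. \emph{(a) Sparsification.} Run the DSEE procedure of Section~\ref{Sec:Mainproof}, deleting edges $uv$ with $x_ux_v$ tiny; as in Claim~\ref{cl8}, after $j$ deletions the loss in spectral radius is at most $\sum_{i<j}\frac{2}{9\sqrt{m-i}}\approx\frac{2j}{9\sqrt m}$, which is smaller than $\theta_k(m)-\theta_k(m-j)\approx\frac{j}{2\sqrt m}$, so it suffices to prove the lemma for the surviving graph $G'$, in which $\lambda(G')>\sqrt{e(G')}$ (so $G'$ still has a triangle by Nosal) and every remaining edge has comparable Perron endpoints. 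A Rayleigh-quotient and interlacing analysis (Lemmas~\ref{Lem:CauchyInter}--\ref{lem3}), in the spirit of Lemma~\ref{lem7}, then pins $G'$ down to at most $k+1$ ``heavy'' vertices carrying almost all the Perron weight, each joined to a linear-sized ``light'' set, with the light part nearly independent. \emph{(b) Constructing the missing cycles.} If the heavy vertices contain a clique on $k+1$ vertices joined to $\ge k+1$ light vertices, alternating them gives every cycle length up to $2(k+1)=2k+2$; if there are exactly $k$ heavy vertices forming $K_k$ and two edge-disjoint ``extra'' edges among the light part, a cycle through $K_k$ can take those two light--light steps to reach lengths $2k+1$ and $2k+2$ while passing through $k$ heavy and $k+1$ (resp. $k+2$) light vertices. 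Either situation contradicts the hypothesis, so $G'$ is $K_k\vee\overline{K_{s}}$ plus at most one extra edge. \emph{(c) The spectral estimate.} Such a $G'$ has $\lambda(G')\le\lambda(K_k\vee\overline{K_{s'}})+O(1/m)$ with $ks'\approx e(G')$, and one extra light--light edge moves $\lambda$ by only $O(1/m)$, whereas $\theta_k$ sits $\Theta(1/\sqrt m)$ above $\lambda(K_k\vee\overline{K_{s'}})$ at the relevant size; hence $\lambda(G')<\theta_k(e(G'))$ unless $G'$ is exactly $S_{\cdot,k}$ with no extra edge, which after transporting back through~(a) forces $G=S_{n,k}$.

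The main obstacle is step~(c), together with making the structural description in~(a) quantitatively precise: a genuinely spectral Kopylov-type bound for graphs of fixed size, with a clean uniqueness statement and error terms controlled to within $o(1/\sqrt m)$, is delicate and is at present established only for small $k$. A secondary subtlety is that in the $K_k$-join case the ``extra'' edges lie among vertices of Perron weight $\Theta(1/\sqrt m)$, so the weighted estimates that separate $S_{n,k}$ from its near-neighbours must be pushed to second order. By contrast, the reductions of the first two paragraphs and the combinatorial content of steps~(a)--(b) are either classical (Nosal, the Erd\H{o}s--Gallai/Kopylov circumference theorems) or routine once the Perron structure is in hand; I would expect the $t\le 2k$ part and the explicit cycle constructions to cause no difficulty, with essentially all the work concentrated in the spectral-Kopylov estimate.
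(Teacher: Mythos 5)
You are attempting to prove a statement that this paper does not prove at all: the displayed statement is Conjecture~5.1 of \cite{ZLS21}, quoted in the concluding remarks as an open problem (the paper only notes that the case $k=1$ reduces to Theorem~\ref{Thm:N09}). So there is no ``paper proof'' to compare against, and your proposal must stand on its own as a resolution of an open conjecture. It does not: the entire load-bearing part of your argument is the lemma in steps~(a)--(c) --- that a $C_{2k+1}$-free or $C_{2k+2}$-free graph of large size $m$ without isolated vertices has $\lambda\le\frac{k-1+\sqrt{4m-k^2+1}}{2}$ with equality only for $S_{n,k}$ --- and, as you yourself concede, this ``spectral Kopylov'' bound with its uniqueness statement is precisely what is not established. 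The assertion in step~(a) that a Rayleigh-quotient/interlacing analysis ``pins $G'$ down to at most $k+1$ heavy vertices \ldots with the light part nearly independent'' is exactly the structural heart of the conjecture, not a routine consequence of Lemmas~\ref{Lem:CauchyInter}--\ref{lem3} or of Lemma~\ref{lem7} (which is tailored to counting $C_4$'s under $\lambda\ge\sqrt m$ and gives no clique-of-size-$k$ structure). Step~(c) likewise needs error control at scale $o(1/\sqrt m)$ around the extremal graph, which you flag as delicate and unproven; and the ``transporting back through~(a)'' of the equality case is unaddressed (edge deletions in the DSEE process need not send $S_{n,k}$ to $S_{n',k}$, so uniqueness for $G'$ does not automatically give uniqueness for $G$).

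Two smaller points. First, your induction is set up so that the base case $k=2$ is not ``for free'': Nosal and Theorem~\ref{Thm:N09} give $C_3$ and $C_4$, but $C_5$ and $C_6$ already require the unproved core lemma, so nothing is gained over attacking general $k$ directly. Second, the soft parts you do verify (the identity $\lambda(S_{n,k})=\frac{k-1+\sqrt{4m-k^2+1}}{2}$ at $n=\frac mk+\frac{k+1}{2}$, the monotonicity $\theta_k>\theta_{k-1}$, and the sparsification bookkeeping comparing the loss $\approx\frac{2j}{9\sqrt m}$ with the threshold drop $\approx\frac{j}{2\sqrt m}$) are fine but peripheral. As it stands the proposal is a plausible programme, not a proof; the conjecture remains open after your argument.
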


When $k=1$, the above conjecture reduces to Nikiforov's result (Theorem \ref{Thm:N09}).
Let $B_{r,k}$ be the join of an $r$-clique with an independent set of size $k$.
We conclude this note with a
new conjecture appeared in \cite{LFL} which extends Theorem \ref{Thm:N09}.
\begin{conj}[Conjecture 1.20 in \!\!\cite{LFL}]
Let m be large enough and G be a $B_{r,k}$-free graph with m edges. Then
$\lambda(G)\leq \sqrt{(1-\frac{1}{r})2m}$, with equality if and only if $G$ is a complete bipartite
graph for $r=2$, and $G$ is a complete regular $r$-partite graph for
$r\geq 3$ with possibly some isolated vertices.
\end{conj}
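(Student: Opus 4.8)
Since $B_{r,1}=K_{r+1}$, the case $k=1$ is Nikiforov's spectral Tur\'an theorem, and for $r=2$ the bound $\sqrt{2m(1-1/r)}=\sqrt m$ together with the equality description is precisely Nikiforov's recent theorem on books (quoted in the introduction), applied with book size $k$. So the substantive case is $r\ge3$, $k\ge2$, where the plan is to show that permitting each $r$-clique up to $k-1$ common neighbours cannot push $\lambda(G)$ past the $K_{r+1}$-free threshold once $m$ is large. I would argue by a stability-to-exact scheme in three stages.

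\emph{Stage 1 (normalisation via the DSEE method).} Apply the procedure of Section~\ref{Sec:Mainproof} verbatim to $G$, deleting an edge $uv$ of $G_i$ whenever $x_ux_v\le\frac1{9\sqrt{e(G_i)}}$. This yields $G':=G_k$ with $e(G')=m-k$, with no light edge surviving, and --- running the computation of Claim~\ref{cl8} with the factor $\sqrt{2(1-1/r)}$ carried in front, the arithmetic still closing because $\sqrt{2(1-1/r)}>4/9$ --- with $\lambda(G')\ge\sqrt{2\,e(G')(1-1/r)}$; deleting edges keeps $G'$ $B_{r,k}$-free. If the procedure ran the full $\lfloor m/2\rfloor$ steps, then on the one hand $\lambda(G')\ge\sqrt{2m(1-1/r)}-\frac49(1-\frac1{\sqrt2})\sqrt m>\sqrt m$ for $r\ge3$, while on the other $\lambda(G')\le\sqrt{2\,e(G')}=\sqrt{m+O(1)}$, a contradiction; so the procedure stopped early, hence $e(G')>m/2$ and every edge of $G'$ is heavy.

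\emph{Stage 2 (spectral stability).} From $x_ux_v>\frac1{9\sqrt{e(G')}}$ on the edges of $G'$, the Cauchy--Schwarz manipulation $\lambda^2x_u=\sum_{v\sim u}\sum_{w\sim v}x_w$ used in Claims~\ref{cl1} and~\ref{cl4} forces every vertex of non-negligible Perron weight to have degree $\Theta(\sqrt m)$, and there to be $\Theta(\sqrt m)$ such vertices. The $B_{r,k}$-free hypothesis bounds the clique count: every $r$-clique lies in at most $k-1$ copies of $K_{r+1}$ and an $m$-edge graph has $O_r(m^{r/2})$ copies of $K_r$, so $k_{r+1}(G')=O_{r,k}(m^{r/2})$, i.e.\ $G'$ is ``almost $K_{r+1}$-free''. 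Feeding this clique-density bound together with $\lambda(G')\ge\sqrt{2\,e(G')(1-1/r)}$ into a spectral stability statement for $K_{r+1}$ --- the stability companion to Nikiforov's spectral Tur\'an theorem, in a form that tolerates $k_{r+1}=O_{r,k}(m^{r/2})$ rather than $k_{r+1}=0$ --- should yield that $G'$ differs in $o(m)$ edges from a complete $r$-partite graph $T=K_{V_1,\dots,V_r}$ with $|V_i|=\Theta(\sqrt m)$ for every $i$.

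\emph{Stage 3 (rigidity, equality case, and the obstacle).} Suppose $G'$ had an edge $xy$ with both ends in some $V_i$. Since $|V_j|=\Theta(\sqrt m)\gg k$ for all $j$ and only $o(m)$ edges of $G'$ are misplaced relative to $T$, a counting argument locates a part $V_{j_0}$ ($j_0\ne i$) and vertices $v_j\in V_j$ ($j\ne i,j_0$) so that $\{x,y\}\cup\{v_j\}_{j\ne i,j_0}$ is a clique with at least $k$ common neighbours inside $V_{j_0}$ --- a copy of $B_{r,k}$, contradiction. Hence $G'$ has no edge inside any $V_i$, so $G'$ is $r$-partite, hence $K_{r+1}$-free; Nikiforov's theorem then gives $\lambda(G')\le\sqrt{2\,e(G')(1-1/r)}$, forcing equality, and its equality case makes $G'$ a complete regular $r$-partite graph with possibly some isolated vertices. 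Finally $k=0$: if $k\ge1$ then $G_{k-1}=G'+uv$ is a complete regular $r$-partite graph plus one edge which is either inside a part --- producing $B_{r,k}\subseteq G_{k-1}\subseteq G$, impossible --- or a pendant/isolated edge, which by a short eigenvector computation in the manner of Claim~\ref{cl9} gives $\lambda(G_{k-1})<\sqrt{2\,e(G_{k-1})(1-1/r)}$, contradicting Stage~1. Thus $G=G'$, completing $r\ge3$; for $r=2$ every complete bipartite graph attains $\lambda=\sqrt m$, which accounts for the stated dichotomy. The hard part is Stage~2: turning the combinatorial bound $k_{r+1}(G')=O_{r,k}(m^{r/2})$ plus a near-extremal spectral radius into genuine structural proximity to a complete $r$-partite graph for all $r\ge3$ appears to require either a $K_{r+1}$-removal-type input or a quantitative strengthening of Nikiforov's $K_{r+1}$-stability theorem admitting a bounded positive clique density, with error terms good enough that the rigidity step can sharpen ``$\sqrt{2m(1-1/r)}+o(\sqrt m)$'' to the exact, slack-free bound.
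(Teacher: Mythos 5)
First, a point of order: the statement you are proving is not a theorem of this paper at all. It is quoted in the concluding remarks as an open problem (Conjecture 1.20 of the survey \cite{LFL}), and the paper contains no proof of it; only the special cases you correctly identify ($k=1$ via Nikiforov's spectral Tur\'an theorem, and $r=2$ via Nikiforov's book theorem \cite{N21}) are known. So your proposal cannot be judged against a proof in the paper, only on its own merits.

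On those merits it is a plausible programme, not a proof, and you say so yourself: the entire argument funnels through Stage~2, where you need a spectral stability theorem for $K_{r+1}$ that tolerates $k_{r+1}(G')=O_{r,k}(m^{r/2})$ copies of $K_{r+1}$ and still returns $o(m)$ edit distance to a balanced complete $r$-partite graph. No such statement exists in the literature you can cite, and it is not a routine strengthening: known spectral stability results are stated for $n$-vertex graphs with $\lambda$ close to the Tur\'an value, whereas here the natural parameter is $m$, the host graph after the DSEE cleaning has $\Theta(\sqrt m)$ vertices of relevant weight but possibly many low-weight vertices, and the permitted clique count $m^{r/2}$ is only one factor of $\sqrt m$ below the extremal count $\Theta(m^{(r+1)/2})$, so ``almost $K_{r+1}$-free'' is a weaker hypothesis than it may look. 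Stage~3 inherits this gap and adds one of its own: global edit distance $o(m)$ does not give the \emph{per-vertex} degree control you use to find $k$ common neighbours of $\{x,y\}\cup\{v_j\}$ in $V_{j_0}$, since the endpoints of the offending interior edge could individually be atypical (few neighbours in other parts); you would need to show such edges are ``light'' and hence already deleted, or argue locally with the Perron weights. Finally, note a notational clash (you use $k$ both for the book parameter and for the number of deleted edges) and that the exact, slack-free equality characterization for $r\ge3$ requires more than stability plus Nikiforov's equality case applied to $G'$: you must also rule out that the original $G$ exceeds the bound while every deleted edge was light, which your last paragraph only sketches. In short, Stages~1 and the reductions for $k=1$, $r=2$ are sound, but the conjecture remains open and your Stage~2 is exactly the missing ingredient, not a step you can currently discharge.
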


%\section*{Acknowledgment}


\begin{thebibliography}{99}
\setlength{\itemsep}{0pt}

\bibitem{AS08}
N. Alon, J.H. Spencer, The probabilistic method. Third edition.
With an appendix on the life and work of Paul Erd\H{o}s. Wiley-Interscience
Series in Discrete Mathematics and Optimization. John Wiley \& Sons,
Inc., Hoboken, NJ, 2008.

\bibitem{BG08}
L. Babai, B. Guiduli, Spectral extrema for graphs: the Zarankiewicz problem,
\emph{Electronic J. Combin.} \textbf{16} (2009), no. 1, R123, 8 pp.

\bibitem{BFP08}
A. Bhattacharya, S. Friedland, U.N. Peled, On the first eigenvalue of bipartite graphs, \emph{Electron. J.
Combin.} \textbf{15} (2008), no. 1, R144, 23 pp.

\bibitem{BH12}
A. Brouwer, W.H. Haemers, Spectra of Graphs, Springer, New York, 2012.

\bibitem{CG98}
F. Chung, R. Graham, Erd\H{o}s on Graphs.
His legacy of unsolved problems. A K Peters, Ltd., Wellesley, MA, 1998.

\bibitem{CS57}
L. Collatz, U. Sinogowitz,
Spektren endlicher Grafen,
\emph{Abh. Math. Semin. Univ. Hamb.},
{\bf 21} (1957), 63--77.

\bibitem{E38}
P. Erd\H{o}s, On sequences of integers no one of which divides the product of two others and some
related problems, \emph{Izvestiya Naustno-Issl. Inst. Mat. i Meh. Tomsk} 2 (1938) 74--82.

\bibitem{ES84}
P. Erd\H{o}s, M. Simonovits, Cube-supersaturated graphs and related problems, P
rogress in graph theory (Waterloo, Ont., 1982), 203--218, Academic Press, Toronto, ON, 1984.

\bibitem{H88}
M. Hofmeister, Spectral radius and degree sequence, \emph{Math. Nachr.} \textbf{139} (1988) 37--44.

\bibitem{IS02}
D. Ismailescu, D. Stefanica, Minimizer graphs for a class of extremal problems,
\emph{J. Graph Theory} {\bf 39} (2002), no. 4, 230--240.

\bibitem{LFL}
Y.T. Li, L.H. Feng, W.J. Liu, A survey on spectral conditions for some
extremal graph problem, \emph{Advances in Math.} (China) (2022), to appear.

\bibitem{LL09}
B.-L. Liu, M.-H. Liu, On the spread of the spectrum of a graph, \emph{Discrete Math.}
\textbf{309} (2009) 2727--2732.

\bibitem{LS83}
L. Lov\'{a}sz, M. Simonovits, On the number of complete subgraphs of a graph, II,
in: Studies in Pure Math, Birkh\"{a}user, 1983, 459--495.

\bibitem{N09}
V. Nikiforov, The maximum spectral radius of $C_4$-free graphs of given order and
size, \emph{Linear Algebra Appl.}
\textbf{430} (2009) 2898--2905.

\bibitem{N10}
V. Nikiforov,
A contribution to the Zarankiewicz problem,
\emph{Linear Algebra Appl.} {\bf 432} (2010), no. 6, 1405--1411.

\bibitem{N11}
V. Nikiforov, Some new results in extremal graph theory,
Surveys in combinatorics 2011, 141--181,
London Math. Soc. Lecture Note Ser., 392,
Cambridge Univ. Press, Cambridge, 2011.

\bibitem{N21}
V. Nikiforov, On a theorem of Nosal, arXiv:2104.12171 (2021).

\bibitem{N17}
B. Ning, On some papers of Nikiforov, \emph{Ars Combin.}
{\bf 135} (2017), 187--195.

\bibitem{NZ21+}
B. Ning, M.Q. Zhai, Counting substructrues and eigenvalues I: triangles,
submitted on Novmeber 2, 2021.

\bibitem{N70}
E. Nosal, Eigenvalues of Graphs, Master Thesis, University of Calgary, 1970.

\bibitem{Y21+}
T.C. Yang, Some extremal results on 4-cycles,
A talk given at the sixth Shanghai Jiaotong University workshop on graph
theory and combinatorics (2021).

\bibitem{ZLS21}
M.Q. Zhai, H.Q. Lin, J.L. Shu, Spectral extrema of graphs of fixed size: cyles and
complete bipartite graphs, \emph{European J. Combin.} \textbf{95} (2021) 103322, 18 pp.
\end{thebibliography}
\end{document}